\newtheorem{pro}{Proposition}[section]
\newtheorem{lem}[pro]{Lemma}
\newtheorem{thm}[pro]{Theorem}
\newtheorem{con}[pro]{Conjecture}
\newtheorem{predef}[pro]{Definition}
\newtheorem{prerem}[pro]{Remark}
\newcommand{\bq}{\begin{equation}}
\newcommand{\eq}{\end{equation}}
\newcommand{\ds}{}
\begin{document}
\title{Decompositions of complete  uniform multi-hypergraphs into Berge paths and cycles of arbitrary lengths}
\author{
	Ramin Javadi \thanks{Corresponding author} \thanks{Department of Mathematical Sciences,
		Isfahan University of Technology,
		P.O. Box: 84156-83111, Isfahan, Iran. Email Address: 
		{rjavadi@cc.iut.ac.ir}.}  \and
	Afsaneh Khodadadpour\thanks{Department of Mathematical Sciences, Isfahan University of Technology,
		P.O. Box: 84156-83111, Isfahan, Iran. Email Address: 
		{a.khodadadpour@math.iut.ac.ir}.}  \and
	Gholamreza Omidi\thanks{Department of Mathematical Sciences, Isfahan University of Technology, P.O. Box: 84156-83111, Isfahan, Iran. Email Address: 
		{romidi@cc.iut.ac.ir}.}
	}
\date{}
\maketitle
\begin{abstract}
In 1981, Alspach conjectured  that  the complete graph $ K_{n} $ could be decomposed into cycles of arbitrary lengths, provided that the obvious necessary conditions would hold. This conjecture was proved completely by Bryant, Horsley and Pettersson in 2014. Moreover, in 1983, Tarsi conjectured that the obvious necessary conditions for packing pairwise edge-disjoint paths of arbitrary lengths in the complete multigraphs were also sufficient. The conjecture was confirmed by Bryant in 2010. In this paper, we investigate an analogous problem as the decomposition of the complete uniform multi-hypergraph $ \mu K_{n}^{(k)} $ into Berge cycles and Berge paths of arbitrary given lengths. We show that for every integer $ \mu\geq 1 $, $ n\geq 108 $ and $ 3\leq k<n $, $ \mu K_{n}^{(k)} $ can be decomposed into Berge cycles and Berge paths of arbitrary lengths, provided that the obvious necessary conditions hold, thereby generalizing a result by K\"{u}hn and Osthus on the decomposition of $K_{n}^{(k)}$ into Hamilton Berge cycles. Furthermore, we obtain the necessary and sufficient conditions for packing the cycles of arbitrary lengths in the complete multigraphs.\\[4pt]
\textbf{Keywords:} Hypergraph, Berge cycle, Berge path, Cycle decomposition, Path decomposition. \\
\textbf{AMS subject classification:} 05C38; 05C70; 05C65.
\end{abstract}

\section{Introduction}\label{sec:intro}
Throughout the paper, all multigraphs are loopless. 
A multigraph is called \textit{even}, if every vertex has an even degree.
Given a multigraph $ G  $  and a pair of distinct vertices $x,y\in V(G)$,  the \textit{multiplicity} of $ xy $ in $ G $, denoted by $m_G(xy)$, is defined as the number of (parallel) edges between $ x $ and $ y $ in $ G $.
 A \textit{decomposition} of a multigraph $ G $ is a family of subgraphs of $ G $ whose edge sets partition the edge set of $ G $. Also, a \textit{packing} of $G$ is a collection of pairwise edge-disjoint subgraphs of $G$.
 Let  $ M=(m_1,\ldots,m_t) $ be a list of positive integers. An \textit{$M$-path packing} of a multigraph $G$ is a packing $\{G_1,\ldots,G_t\}$ such that for every $i\in\{1,\ldots, t\}$, $ G_i $ is an  $m_i$-path (a path with $m_i$ edges).
Also, an \textit{$M$-path decomposition} of $  G$ is a decomposition $\{G_1,\ldots,G_t\}  $ such that for every $i\in\{1,\ldots, t\}$, $ G_i $ is an  $m_i$-path. An \textit{$M$-cycle packing} and an \textit{$M$-cycle decomposition} can be defined in a similar way by substituting cycles for paths.

The complete multigraph $\lambda K_n$ is a multigraph on $n$ vertices where there are exactly $ \lambda $ edges between each pair of vertices. In 1983, Tarsi \cite{tarsi} conjectured that for a list of integers $ M=(m_1,\ldots,m_t) $,  the obvious necessary conditions for the existence of an $M$-path packing for $\lambda K_n$ were also sufficient. In other words, $\lambda K_n$ admits an $M$-path packing if and only if $1\leq m_i\leq n-1$, for every $i\in\{1, \ldots , t\}$  and $ m_1+\cdots+m_t \leq \lambda n(n-1)/2$. Tarsi \cite{tarsi} proved his conjecture for both cases ``$n$ odd'' and  ``$  \lambda$  even'' with some limitation on the maximum length of paths in $M$. He also proved his conjecture for the special case where all paths would have the same length. A survey on the path decompositions can be found in \cite{heinrich}.  Finally, the problem was completely solved by Bryant in \cite{bryant10} as follows.
\begin{thm} {\rm\cite{bryant10}}\label{thm:path} Let $n,\lambda $ and $ t $ be positive integers and $M=(m_1, \ldots ,m_t)$ be a list of positive integers. Then $\lambda K_n$ admits an $M$-path packing if and only if $m_i \leq n-1$, for every $i\in\{1, \ldots , t\}$ and $ m_1+\cdots+m_t \leq \lambda \binom{n}{2}$.
\end{thm}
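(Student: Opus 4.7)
Necessity is immediate: any $m_i$-path uses $m_i+1$ distinct vertices, forcing $m_i \le n-1$; and since the paths are pairwise edge-disjoint, $\sum_i m_i \le |E(\lambda K_n)| = \lambda\binom{n}{2}$.

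For sufficiency, my first step would be to reduce the packing problem to a decomposition problem. If $s:=\sum_i m_i$ is strictly less than $\lambda\binom{n}{2}$, extend $M$ by appending $\lambda\binom{n}{2}-s$ entries equal to $1$, producing a list $M'$ that still satisfies the hypotheses and whose entries sum to exactly $\lambda\binom{n}{2}$; any $M'$-path decomposition of $\lambda K_n$ restricts, by discarding the appended $1$-paths, to an $M$-path packing. Thus it suffices to produce an $M$-path decomposition of $\lambda K_n$ whenever $\sum_i m_i = \lambda\binom{n}{2}$ and $1 \le m_i \le n-1$.

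My plan for the decomposition is to start from a highly structured base decomposition of $\lambda K_n$ and modify it by local \emph{cut-and-splice} moves until the multiset of path lengths matches $M$. A natural base comes from classical Walecki-type constructions: $K_n$ decomposes into $n/2$ Hamilton paths when $n$ is even, and into $(n-1)/2$ Hamilton cycles (each of which can be cut into a Hamilton path plus a leftover edge) when $n$ is odd. Running such a decomposition $\lambda$ times yields a starting decomposition of $\lambda K_n$ into near-Hamilton paths. The basic local move takes two edge-disjoint paths $P,Q$ that share an internal vertex and re-splices them there into two new edge-disjoint paths $P',Q'$ with $|P'|+|Q'|=|P|+|Q|$ but with $|P'|,|Q'|$ adjustable by transferring an endpoint segment across the shared vertex. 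An induction on an $\ell_1$-type discrepancy between the current length multiset and $M$ should drive the lengths to the desired values, supplemented by a short-path/long-path exchange lemma to split a single long path into several shorter ones when the target list contains many short entries.

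The principal obstacle I expect is respecting endpoint and parity constraints throughout the splicing process. The degree sequence of $\lambda K_n$ determines how many times each vertex must appear as a path endpoint in any decomposition, and local moves can shift endpoints between vertices in a way that may violate parity at odd-degree vertices (which occur when $\lambda(n-1)$ is odd). This makes the lists with many entries of length $n-1$, or with $n$ small, the tightest cases. To manage this I would run an outer induction on $\lambda$, peeling off a single Walecki-type copy of $K_n$ together with a carefully chosen sublist of $M$ at each stage, reducing to $\lambda=1$; and a final case analysis would dispose of the small-$n$ and extremal length-profile configurations by explicit construction.
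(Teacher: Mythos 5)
This theorem is not proved in the paper at all: it is quoted as a known result of Bryant (the resolution of Tarsi's 1983 conjecture) and used as a black box, so there is no internal proof to compare against. Judged on its own terms, your proposal has a genuine gap. The necessity argument and the reduction from packing to decomposition (padding $M$ with $1$-entries until the lengths sum to $\lambda\binom{n}{2}$) are fine, but everything after that is a plan rather than a proof, and the plan is silent exactly where the difficulty of the theorem lives. Your basic ``cut-and-splice'' move is not well defined as stated: if you transfer an end-segment of $P$ across a shared internal vertex onto $Q$, the resulting objects are generally not paths, because the transferred segment may revisit vertices of $Q$ (and vice versa); guaranteeing simplicity of both new pieces, for arbitrary current configurations, is the central obstruction. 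Likewise, you assert that an induction on an $\ell_1$-discrepancy ``should'' converge, but you give no argument that a legal move strictly decreasing the discrepancy always exists, and the extremal profiles (all entries equal to $n-1$, or many very short entries, with the sum exactly $\lambda\binom{n}{2}$) are precisely the configurations where naive exchanges get stuck; you name the endpoint/parity issue as an expected obstacle but do not resolve it.

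For calibration: the sufficiency direction is the content of Tarsi's conjecture, which stood open from 1983 until Bryant's 2010 paper, and Bryant's proof requires substantially more machinery than Walecki decompositions plus local re-splicing (careful edge-colouring/switching arguments controlling endpoints and simplicity throughout). So while your outline is a reasonable first heuristic, it does not constitute a proof of the statement, and the missing ingredient -- a lemma guaranteeing that the length profile of a path decomposition of $\lambda K_n$ can always be perturbed toward an arbitrary admissible target while keeping every piece a simple path -- is the theorem's real content, not a routine verification.
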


Note that if a multigraph $G$ admits a cycle decomposition, then $G$ is even. Thus, when $\lambda (n-1)$ is odd, $\lambda K_n$ has no cycle decomposition and for every cycle packing of $ \lambda K_n $, in the \textit{leave multigraph} (the multigraph obtained from $ \lambda K_n $ by removing all the edges in the packing), every vertex has an odd degree. Now, suppose that $I$ is a perfect matching in $\lambda K_n$, when $ {\lambda(n-1) }$ is odd, and it is empty, when $ {\lambda(n-1) }$ is even. The decomposition of $\lambda K_n-I$ into cycles of the prescribed lengths has been at the center of attention for many years. 
In 1981, Alspach \cite{alspach81} conjectured that for a list of integers $M=(m_1,\ldots, m_t)$, $ K_n-I $ admits an $M$-cycle decomposition if and only if $3\leq m_i\leq n$, for every $i\in\{1, \ldots , t\}$  and $ m_1+\cdots+m_t =  \binom{n}{2}-|I|$. Many efforts have been made aiming to prove this conjecture in past decades.  
The special case where all of the cycles have the same length was proved in \cite{alspach01,sajna02}. Eventually, in 2014,  Bryant et al. \cite{bryant14} using the earlier important results, gave an affirmative answer to Alspach's conjecture.
The multigraph analogue of Alspach's conjecture has also been studied and 
recently, a complete solution for the decomposition of  $\lambda K_n-I$ into cycles of arbitrary lengths has been obtained by Bryant et al. in \cite{bryant15} as follows.

Let $f(\lambda,n)$ be the number of edges of $\lambda K_n-I$, i.e. 
\begin{equation} \label{eq:f}
 f(\lambda,n)=\begin{cases}
\lambda\,\dfrac{n(n-1)}{2} & \text{if } \lambda (n-1) \text{ is even,} \\[4pt]
\lambda\,\dfrac{n(n-1)}{2}-\dfrac{n}{2}  & \text{if } \lambda (n-1) \text{ is odd.}
\end{cases}
\end{equation}

For a list of integers $M$, let $\nu_2(M)$ denote the number of occurrences of $ 2 $ in $ M$ and $ \sigma(M) $ denote the sum of all integers in $ M $. For positive integers $n,\lambda$, a list $M={(m_{1},\ldots,m_{t})} $ of integers is said to be \textit{${ (\lambda,n)}$-admissible} if
\begin{itemize}
\item[(i)] $ 2\leq m_{1},\ldots,m_{t}\leq n $;

\item[(ii)] $ \sigma(M)=f(\lambda,n) $;

\item[(iii)] for $ \lambda $ odd, $ \nu_2(M)\leq  \frac{(\lambda -1)}{2}\binom{n}{2}$; and

\item[(iv)] for $\lambda$ even, $ \max\{m_1, \ldots ,m_t\} + t-2 \leq \frac{\lambda}{2}\binom{n}{2}$.

\end{itemize}
The following theorem asserts that the above conditions are necessary and sufficient for the existence of an $M-$cycle decomposition of $\lambda K_n-I$.
\begin{thm} {\rm \cite{bryant15}}\label{thm2} \label{thm:cycle}
For positive integers $ \lambda, n$ and a list $ M $ of integers, there is an $M$-cycle decomposition of $\lambda K_n-I$ if and only if $M$ is a $(\lambda , n)$-admissible list.
\end{thm}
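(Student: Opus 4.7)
The plan is to establish necessity elementarily and then prove sufficiency by induction on $\lambda$, taking Alspach's conjecture for $K_n-I$ (the $\lambda=1$ instance, proved in \cite{bryant14}) as the base case.

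For necessity, condition (i) holds because a cycle in a multigraph uses at least two parallel edges and at most $n$ vertices; (ii) is the edge count against the definition of $f(\lambda,n)$; (iii) follows since each pair $\{x,y\}$ admits at most $\lfloor\lambda/2\rfloor$ two-cycles and summing over the $\binom{n}{2}$ pairs gives the stated bound when $\lambda$ is odd. Condition (iv), for even $\lambda$, expresses a global covering constraint linking the largest cycle length with the total number of cycles; it is obtained by a pair-counting / leave-parity argument in $\tfrac{\lambda}{2}K_n$.

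For sufficiency, given a $(\lambda,n)$-admissible list $M$ with $\lambda\geq 2$, I would partition $M=M_1\sqcup M_2$ so that the two pieces can be handled separately. If $\lambda\geq 3$ is odd, the plan is to peel off a copy of $K_n-I$: select $M_1\subseteq M$ consisting of elements of size at least $3$ with $\sigma(M_1)=f(1,n)$, so that $M_1$ is $(1,n)$-admissible, and verify that the residue $M_2$ is $(\lambda-1,n)$-admissible (noting that $\lambda-1$ is even, so only (i), (ii) and (iv) need to be checked); then apply the base case to $M_1$ inside $K_n-I$ and the induction hypothesis to $M_2$ inside $(\lambda-1)K_n$. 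If $\lambda\geq 4$ is even, I would instead peel off a copy of $2K_n$: split $M$ into a $(2,n)$-admissible part and a $(\lambda-2,n)$-admissible part, and recurse. The extra base case $\lambda=2$ must be treated directly; a natural route is to extract all the 2-cycles first (placing each on a distinct pair, which is possible thanks to (iv)) and then decompose the residual simple subgraph of $2K_n$ into the remaining cycles of length at least $3$ using an extension of Alspach's theorem.

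The principal obstacle is the existence of such a partition of $M$ with both pieces admissible. Membership in $[2,n]$ and the sum condition split greedily, while the two-cycle count (iii) is easily distributed since 2-cycles can only live in the even-$\lambda$ piece. The constraint (iv), however, globally couples $\max(M)$ with $|M|$, and preserving $\max(M_i)+|M_i|-2\leq\tfrac{\lambda_i}{2}\binom{n}{2}$ under the split requires real care, especially in extremal regimes where $M$ consists mostly of 2-cycles together with a few very long cycles, or where $\max(M)=n$ and $|M|$ is close to $\tfrac{\lambda}{2}\binom{n}{2}$. A case analysis on $\nu_2(M)$ and on $\max(M)$ will be needed; I expect the bulk of the technical work to lie in verifying condition (iv) for the residual sublist and in establishing the separate $\lambda=2$ base.
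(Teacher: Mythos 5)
First, a point of orientation: the paper does not prove this statement at all --- Theorem~\ref{thm:cycle} is imported verbatim from \cite{bryant15} (Bryant, Horsley, Maenhaut and Smith), so there is no internal proof to compare against; the relevant benchmark is their paper, whose argument is a long one built on top of the resolution of Alspach's conjecture and is emphatically not a layer-peeling induction on $\lambda$. Your sufficiency plan has a concrete, fatal gap exactly where you suspect trouble but dismiss it: you write that ``the sum condition splits greedily'', but it does not, because cycle lengths are atomic and the two layers must receive \emph{exact} sums $f(1,n)$ and $f(\lambda-1,n)$. Take $\lambda=3$, $n=5$, $M=(4,4,4,4,4,4,4,2)$: here $f(3,5)=30$, $\nu_2(M)=1\leq\frac{\lambda-1}{2}\binom{5}{2}=10$, so $M$ is $(3,5)$-admissible and the theorem does apply to it; yet any split into a $(1,5)$-admissible piece and a $(2,5)$-admissible piece requires a sublist of entries at least $3$ (the simple layer admits no $2$-cycles) summing to exactly $f(1,5)=10$, which is impossible since the available entries are all $4$'s. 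The same obstruction persists for arbitrarily large $n$ (e.g.\ all $4$'s and one $2$ with $3\binom{n}{2}\equiv 2 \pmod 4$), and it blocks both of your peeling orders for $\lambda=3$, so the induction cannot even start in general.

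Two further gaps: your $\lambda=2$ base case is not a base case one can ``treat directly'' --- after extracting the $2$-cycles on distinct pairs, the residue is $2(K_n-F)$ for an essentially arbitrary graph $F$, and decomposing such graphs into cycles of prescribed lengths is not an ``extension of Alspach's theorem'' that exists in the literature; it is a problem of comparable depth to the theorem you are trying to prove. Also, the necessity of condition (iv) is genuinely nontrivial: it is not a routine ``pair-counting / leave-parity argument'' but requires a counting lemma of the type the present paper proves as Lemma~\ref{lem:packing} (applied with $C_0$ a longest cycle, giving $t\leq\frac{\lambda}{2}\binom{n}{2}-m_1+2$), whose proof is itself a delicate minimal-counterexample argument. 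In short, the necessity half can be repaired with that lemma, but the sufficiency strategy needs a fundamentally different idea; the actual proof in \cite{bryant15} does not proceed by splitting $M$ across edge-disjoint copies of $K_n-I$ and $2K_n$.
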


The conditions (i) and (ii) are obviously necessary for the existence of an $M$-cycle decomposition for $\lambda K_n-I$. The conditions (iii) and (iv) provide some restrictions on the number of cycles of length two and can be deduced by a discussion on the multiplicity of edges in $\lambda K_n-I$ (see \cite{bryant15}).

Towards proving Alspach's conjecture, the problem of packing cycles of the prescribed lengths in $\lambda K_n-I$ has been studied in the literature (e.g. see \cite{bryant08}). In this paper, first as a generalization of Theorem~\ref{thm:cycle}, we prove the necessary and sufficient conditions for the existence an $M$-cycle packing in $\lambda K_n-I$, for a given list of integers $M$. More precisely, we prove the following.

\begin{thm} \label{lem:disjoint}
Let $\lambda$ and $n$ be two positive integers and $M=(m_1,\ldots, m_t)$ be a list of integers, where $m_1=\max\{m_1,\ldots, m_t\}$. Also, let $f(\lambda,n)$ be as in \eqref{eq:f} and define $r=f(\lambda,n)-\sigma(M)$.  Then, $\lambda K_n-I$ admits an $M$-cycle packing if and only if,
\begin{itemize}
\item[\rm (i)] $ 2\leq m_{1},\ldots,m_{t}\leq n $ and $r\geq 0$;
\item[\rm (ii)] for $\lambda$ odd, either $r\neq 1,2$ and $  \nu_2(M)\leq  \frac{(\lambda -1)}{2}\binom{n}{2} $, or $r=2$ and $   \nu_2(M)< \frac{(\lambda -1)}{2}\binom{n}{2} $; and
\item[\rm (iii)] for $\lambda$ even, either $r=0$ and $ m_1+t-2\leq \frac{\lambda}{2}\binom{n}{2}$, or $r\geq 2$ and $  m_1+t-2< \frac{\lambda}{2}\binom{n}{2}$.
\end{itemize}
\end{thm}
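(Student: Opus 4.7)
The strategy is to view an $M$-cycle packing of $\lambda K_n-I$ as an $(M\cup N)$-cycle decomposition, where $N=(n_1,\ldots,n_s)$ is a list of cycle lengths, each in $[2,n]$, summing to $r=f(\lambda,n)-\sigma(M)$ and describing the leave multigraph. The theorem thus reduces to determining when $N$ can be chosen so that $M\cup N$ is $(\lambda,n)$-admissible in the sense of \pref{thm:cycle}; that theorem then supplies the required decomposition, and discarding the cycles indexed by $N$ yields the desired packing.

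\emph{Necessity.} Given an $M$-cycle packing $\mathcal P$, the leave $L:=(\lambda K_n-I)\setminus E(\mathcal P)$ has all even vertex-degrees (since $\lambda K_n-I$ does and cycles contribute evenly to every degree), so $L$ contains no isolated edge and hence $r\neq 1$; moreover $L$ decomposes into cycles whose lengths form a list $N$. Applying \pref{thm:cycle} to the resulting $(M\cup N)$-cycle decomposition and translating the admissibility back to $M$ yields our (i)--(iii). For instance, when $\lambda$ is odd and $r=2$ the leave is forced to be a pair of parallel edges, so $\nu_2(N)=1$ and hence $\nu_2(M)<\tfrac{\lambda-1}{2}\binom{n}{2}$; when $\lambda$ is even and $r\geq 2$ we always have $|N|\geq 1$ and $\max(M\cup N)\geq m_1$, and condition (iv) of \pref{thm:cycle} then delivers the strict inequality $m_1+t-2<\tfrac{\lambda}{2}\binom{n}{2}$.

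\emph{Sufficiency.} Conversely, from an $M$ satisfying (i)--(iii) I construct a suitable $N$ directly. Take $N=\emptyset$ when $r=0$ and $N=(2)$ when $r=2$; the strict bounds in (ii) and (iii) provide exactly the slack needed. For $r\geq 3$ the construction splits by the parity of $\lambda$. When $\lambda$ is odd, I partition $r$ into parts from $[2,n]$ including a 2 only while $\nu_2(M\cup N)$ remains strictly below $\tfrac{\lambda-1}{2}\binom{n}{2}$; this is feasible because $\{3,\ldots,n\}$ generates every integer $\geq 3$ when $n\geq 5$, and the borderline small-$n$ configurations are excluded by~(ii). When $\lambda$ is even, I minimise $\max(N)+|N|$: if $r\leq m_1$ take $N=(r)$, and otherwise split $r$ into $s=\lceil r/m_1\rceil$ parts of size at most $m_1$ so that $\max(M\cup N)=m_1$. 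The strict hypothesis $m_1+t-2<\tfrac{\lambda}{2}\binom{n}{2}$ together with the refined lower bound $\sigma(M)\geq m_1+2(t-1)$ (one part equals $m_1$, the others are at least $2$) shows that $s$ stays within the budget $\tfrac{\lambda}{2}\binom{n}{2}-m_1-t+2$ demanded by condition (iv) of \pref{thm:cycle}.

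\emph{Main obstacle.} The delicate step is the $\lambda$ even sufficiency: the inequality $\max(M\cup N)+|M\cup N|-2\leq\tfrac{\lambda}{2}\binom{n}{2}$ is tight, so $\max(N)$ and $|N|$ have to be controlled simultaneously, and the single unit of slack afforded by the strict inequality of (iii) is exactly what makes the construction fit. This tightness is also what renders the strict (rather than weak) inequalities in (ii) for $r=2$ and in (iii) for $r\geq 2$ genuinely necessary, matching the necessity analysis above.
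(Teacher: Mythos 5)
Your proposal is correct in substance and shares the paper's skeleton on the sufficiency side: complete $M$ by a list $N$ of cycle lengths summing to $r$ so that $M\cup N$ is $(\lambda,n)$-admissible, invoke \pref{thm:cycle}, and delete the added cycles. Where you genuinely diverge is in the necessity of condition (iii): the paper proves and applies a separate counting lemma (\pref{lem:packing}) bounding the size of any cycle packing of an even multigraph, whereas you decompose the (even) leave into cycles and apply the ``only if'' direction of \pref{thm:cycle} to the completed list $M\cup N$; since $|N|\geq 1$ whenever $r\geq 2$, admissibility of $M\cup N$ immediately forces $m_1+t-1\leq \tfrac{\lambda}{2}\binom{n}{2}$, i.e.\ the strict inequality. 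This is a leaner route that makes \pref{lem:packing} unnecessary for this theorem (the paper's lemma is more general, but not needed here). You also pad differently in the even case with $r>m_1$: the paper appends one cycle of length in $\{m_1-1,m_1,m_1+1\}$ plus $2$-cycles and checks (iv) via $\sigma(M')\geq m_1+m_{t+1}+2(t+s-2)$, while you append $s=\lceil r/m_1\rceil$ parts of size at most $m_1$; your budget check does go through, because $r\geq m_1+1$ together with $\sigma(M)\geq m_1+2(t-1)$ already forces $t\leq\tfrac{\lambda}{2}\binom{n}{2}-m_1$, and then $\lceil r/m_1\rceil\leq\tfrac{\lambda}{2}\binom{n}{2}-m_1-t+2$ and the partition into parts in $[2,m_1]$ is feasible (note that when $m_1=2$ all $m_i=2$, so $r$ is automatically even). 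Two details deserve explicit care when writing this up: in the odd case with $r\geq 3$, condition (ii) gives no slack for an extra $2$ when $\nu_2(M)$ equals $\tfrac{\lambda-1}{2}\binom{n}{2}$, so you must partition $r$ using parts in $[3,n]$ only (as the paper does with $\{3,4,5\}$, which implicitly needs $n\geq 5$; very small $n$ requires a separate word in both your argument and the paper's); and the feasibility/arithmetic claims in your even-case padding should be spelled out rather than asserted, though as noted they are correct.
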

Following the same line of thought, the analogous problems on the decompositions of the complete uniform hypergraphs have been studied in the literature.  A (\textit{multi}-)\textit{hypergraph} is a pair $H=(V,E)$, where $V$ is a finite set of vertices and $E$ is a family of subsets of $V$ called edges (an edge can be repeated several times). A hypergraph is called \textit{$k$-uniform} if each edge is of size $k$. The complete $k$-uniform hypergraph on $n$ vertices with multiplicity $ \mu $, denoted by $\mu K_n^{(k)}$, is defined as the hypergraph with vertex set $V$ of size $n$, such that each $k$-subset of $V$ appears exactly $\mu$ times as its edges. Thus, $ \mu K_n^{(k)} $ has exactly $\mu \binom{n}{k}$ edges.
A \textit{Berge path} of length $ \ell $ in a hypergraph $ H $ is an alternating sequence $ v_{1} , e_{1} , v_{2} , \ldots , v_{\ell} , e_{\ell}, v_{\ell+1} $ of distinct vertices $ v_{i} $ and distinct edges $ e_{i} $ of $ H $, such that each edge $ e_{i} $ contains vertices $ v_{i} $ and $ v_{i+1}$. Also, a \textit{Berge cycle} of length $ \ell $ in $ H$ 
 is an alternating sequence $ v_{1} , e_{1} , v_{2} , \ldots , v_{\ell} , e_{\ell} $ of distinct vertices $ v_{i} $ and distinct edges $ e_{i} $ of $ H $, such that each edge $ e_{i} $ contains the vertices $ v_{i} $ and $ v_{i+1}$ (assuming $ v_{\ell+1}=v_1$). The sequence of vertices $ (v_1,\ldots, v_{\ell+1}) $ (resp. $  (v_1,\ldots, v_{\ell})  $) is called the \textit{core sequence} of the Berge path (resp. Berge cycle). Furthermore, a Berge cycle of length $ n $ in a hypergraph with $ n $ vertices is called a \textit{Hamilton Berge cycle}.
A \textit{decomposition} of $ \mu K_n^{(k)} $ into Berge cycles and paths is a family of Berge cycles and paths in $ \mu K_n^{(k)}$ whose edges are disjoint and have the union the edge set of $ \mu K_n^{(k)} $.  

Fix integers $k$ and $n$ such that $3\leq k<n$. If  $K^{(k)}_n$ has a decomposition into Hamilton Berge cycles, then  $n$ should obviously divide the number of edges $\binom{n}{k}$. Bermond et al. \cite{bermond78} conjectured that this condition could also be sufficient for the existence of such a decomposition of $  K^{(k)}_n $.  For $k=3$, the conjecture follows from combining the results of Bermond \cite{bermond2} and Verrall \cite{verrall}. 
Petecki \cite{petecki} added some restrictions  on the decomposition and found the necessary and sufficient conditions for the existence of such decomposition. 
Finally, in 2014, K\"{u}hn and Osthus  \cite{kuhn} proved the conjecture for every $k\geq 3$ as long as $ n $ was not too small. More precisely, they proved the following. 
\begin{thm} {\rm \cite{kuhn}}\label{thm4}
Suppose that  $n\geq 100$, $ 3\leq k<n$ and $n$ divides $\binom{n}{k}$. Then the complete $k$-uniform hypergraph $K^{(k)}_n$  has a decomposition into Hamilton Berge cycles.
 \end{thm}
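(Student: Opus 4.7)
The plan is to encode the decomposition as a bipartite matching and then construct the matching via a mixture of random construction and Hall's theorem. Set $N:=\binom{n}{k}/n$, the number of Hamilton Berge cycles required (the divisibility hypothesis ensures $N$ is an integer). A Hamilton Berge cycle in $K_n^{(k)}$ is determined by a pair $(C,\phi)$, where $C$ is a Hamilton cycle in the underlying graph $K_n$ (the \emph{core}) and $\phi\colon E(C)\to E(K_n^{(k)})$ is an injection with $\phi(\{u,v\})\supseteq\{u,v\}$ for each edge $\{u,v\}$ of $C$. Thus, a Hamilton Berge cycle decomposition of $K_n^{(k)}$ is the same as a list of cores $C_1,\dots,C_N$ (which need not be edge-disjoint in $K_n$) together with a bijection from the multiset $\bigsqcup_{i=1}^{N} E(C_i)$ to $E(K_n^{(k)})$ respecting set containment. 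I would encode this in an auxiliary bipartite graph $B$ with left vertices the $Nn=\binom{n}{k}$ core-slots, right vertices the hyperedges of $K_n^{(k)}$, and a slot labelled by a pair $\{u,v\}$ joined to a hyperedge $e$ iff $\{u,v\}\subseteq e$; a perfect matching in $B$ then yields the desired decomposition.

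The next step is to choose the cores so that $B$ is close to regular on both sides. Each slot has degree $\binom{n-2}{k-2}$ in $B$ (the number of hyperedges containing its pair). If the cores are chosen so that each pair $\{u,v\}$ appears as a slot in roughly $2N/(n-1)$ of the $C_i$'s, then the expected degree of a hyperedge in $B$ is $\binom{k}{2}\cdot 2N/(n-1)=\binom{n-2}{k-2}$ by a direct binomial-coefficient identity, so $B$ becomes approximately bi-regular of the same degree on both sides. The simplest way to achieve this is to pick each $C_i$ uniformly and independently at random among the Hamilton cycles of $K_n$ and then apply Chernoff/Azuma concentration to show that, with positive probability, every pair appears $2N/(n-1)\pm o(N/n)$ times.

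The third step is to extract a perfect matching from the near-regular $B$. Because both sides have large average degree of the same order, Hall's condition can be verified by a standard dichotomy: for a small set $S$ of slots its neighbourhood $N_B(S)$ has size at least $\min(|S|\binom{n-2}{k-2},\binom{n}{k})\geq|S|$; for large $|S|$ one switches to the complementary hyperedge side and uses that each hyperedge contains $\binom{k}{2}$ distinct pairs, each of which appears many times as a slot, so any putative small neighbourhood violates the approximate regularity.

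The main difficulty, and the reason for the hypothesis $n\geq 100$, is upgrading a \emph{near-perfect} matching to a genuinely \emph{perfect} one. A near-perfect matching in an approximately regular bipartite graph is straightforward via the nibble or Pippenger--Spencer approach, but perfection requires an absorbing device. The cleanest implementation is to reserve a small number of cores as absorbers at the outset and to show that, regardless of which small set of hyperedges and slots remains after matching most of $B$, the reserved structure can swallow the residue by local re-routings of the assignments $\phi_i$ on short sub-paths. Engineering this absorber so that it works uniformly over every possible residue, while preserving the approximate regularity used above, is the delicate part of the proof, and it is precisely here that the quantitative bound $n\geq 100$ becomes operative.
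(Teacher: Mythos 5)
Your overall reduction (hyperedges on one side, pair-slots of the cores on the other, a perfect matching giving the decomposition) is exactly the right frame, and it is the frame used in the cited proof of this theorem and in Lemma~\ref{thm:sdr} of this paper. But the way you propose to find the perfect matching has two genuine gaps. First, the claim that Hall's condition for the auxiliary bipartite graph follows from approximate bi-regularity by a ``standard dichotomy'' is not true: a bipartite graph whose two sides have degrees $d(1\pm o(1))$ is only guaranteed a \emph{near}-perfect matching, and the deficiency cannot be removed by degree counting alone. The actual proof does not use regularity at all at this point; it verifies Hall's condition \emph{exactly} via Kruskal--Katona-type shadow estimates (the content of Lemma~\ref{lem6} here): for a set $S$ of hyperedges, its neighbourhood among pairs is the $(k-2)$th lower shadow $\delta^-_{k-2}(S)$, and $|S|=\binom{s}{k}$ forces $|\delta^-_{k-2}(S)|\geq\binom{s}{2}$, with a complementary upper-shadow bound for large $S$. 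Correspondingly, the cores are not random: one takes Hamilton cycles coming from explicit Hamilton decompositions of $\lambda K_n$ (plus a small remainder), so that every pair occurs as a slot either $\lambda$ or $\lambda+1$ times exactly, where $\lambda\binom{n}{2}\le\binom{n}{k}<(\lambda+1)\binom{n}{2}$; this exact near-balance is what makes the shadow computation close. Because Hall then holds outright, the matching obtained is already perfect and no absorber is needed; the bound $n\geq100$ comes from the explicit inequalities in this shadow calculation, not from any absorption step.

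Second, even taken on its own terms, your route does not reach the stated theorem. The absorber ``swallowing an arbitrary residue by local re-routings'' is precisely the hard part and you give no construction of it, so the argument is incomplete where it matters most. Moreover, the random choice of cores plus Chernoff cannot deliver the explicit threshold $n\geq100$: for $k=3$ and $n=100$ the expected number of times a fixed pair is used is $2N/(n-1)=\binom{n}{3}/\binom{n}{2}\approx 33$, far too small for concentration to survive a union bound over all $\binom{n}{2}$ pairs, so at best you would prove an asymptotic version ``for $n$ sufficiently large.'' If you replace the random cores by an explicit balanced family (Walecki-type Hamilton decompositions) and replace the regularity-plus-absorption step by an exact Hall verification through the shadow bounds of Lemma~\ref{lem6}, your outline becomes the cited proof.
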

As the second task in this paper, we investigate the decomposition of complete uniform hypergraphs into Berge cycles and paths with the prescribed arbitrary lengths. In fact, as a generalization of Theorem~\ref{thm4}, we prove the following. 
\begin{thm}\label{thm5}
 Let $\mu , k , n$ be positive integers such that $3\leq k < n $ and if $k= 3$, then $ n\geq 108$, if $k= 4$, then $ n\geq 54 $ and if $ k\geq 5 $, then $ n\geq 38 $. Also, let $  C=(m_{1},\ldots,m_{s})$ and  $P=(n_1,\ldots,n_t)$ be two lists of integers such that $ 2\leq m_{i}\leq n $, for every $ i\in\{1,\ldots,s\} $ and $1\leq n_i\leq n-1$,  for every $ i\in\{1,\ldots,t\} $  and
 $ \sum_{i=1}^{s} m_{i}+\sum_{i=1}^{t} n_{i}=\mu  \binom{n}{k}$.
 Then, $\mu K_n^{(k)} $ has a decomposition into $ s $ Berge cycles of lengths $ m_{1},\ldots,m_{s} $
 and $ t $ Berge paths of lengths $ n_{1},\ldots,n_{t} $.
 \end{thm}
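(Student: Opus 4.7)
The plan is to reduce the hypergraph decomposition problem to two simpler sub-problems: a multigraph decomposition and a ``Berge-lifting.'' Every Berge cycle (resp.\ Berge path) in $\mu K_n^{(k)}$ projects, via its core sequence, to an ordinary cycle (resp.\ path) of the same length in an ordinary multigraph on $V$; conversely, an ordinary cycle or path on $V$ lifts to a Berge one as long as each pair-edge $uv$ is assigned a distinct $k$-edge of $\mu K_n^{(k)}$ containing $\{u,v\}$. Thus it suffices to construct (i)~a multigraph $G$ on $V$ with $|E(G)| = \mu\binom{n}{k}$ admitting a decomposition into $s$ cycles of lengths $m_1,\ldots,m_s$ and $t$ paths of lengths $n_1,\ldots,n_t$, together with (ii)~a bijection $\phi\colon E(G) \to E(\mu K_n^{(k)})$ (counted with multiplicities) such that $\phi(e) \supseteq e$ for every $e \in E(G)$.

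For the shadow multigraph $G$, note the identity $\binom{k}{2}\binom{n}{k} = \binom{n-2}{k-2}\binom{n}{2}$, so setting $\lambda := \mu\binom{n-2}{k-2}/\binom{k}{2}$ gives the unique pair-multiplicity for which a $\lambda$-regular multigraph on $V$ would have exactly $\mu\binom{n}{k}$ edges. When $\lambda$ is an integer we take $G = \lambda K_n$; otherwise we take $G$ with pair-multiplicities in $\{\lfloor\lambda\rfloor, \lceil\lambda\rceil\}$, distributing the ``extras'' as uniformly as possible (e.g.\ along a regular subgraph). For the decomposition step (i), first peel off the $s$ cycles by invoking Theorem~\ref{lem:disjoint} and then pack the $t$ paths in the residual by Theorem~\ref{thm:path}. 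The admissibility conditions of Theorem~\ref{lem:disjoint} (the bounds on $\nu_2$ and on $m_1 + t - 2$) are satisfied with plenty of slack from the hypotheses $2 \leq m_i \leq n$, $1 \leq n_j \leq n-1$, and the assumed lower bound on $n$.

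For the Berge-lifting (ii), form the bipartite multigraph $B$ whose vertex-multisets are $E(G)$ and $E(\mu K_n^{(k)})$, with an edge between $e$ and $h$ whenever $e \subseteq h$. A perfect matching in $B$ is exactly a valid $\phi$. When $G = \lambda K_n$, the graph $B$ is $d$-regular with common degree $d = \mu\binom{n-2}{k-2}$, so a perfect matching exists by K\"onig's theorem. In the non-uniform case, Hall's condition follows from near-regularity of $G$: any $A \subseteq E(G)$ has neighborhood of size at least $\mu\binom{n-2}{k-2}|A|/\bigl(\binom{k}{2}\lceil\lambda\rceil\bigr) \geq |A|$, with slack guaranteed because the extras are spread evenly.

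The main obstacle is coordinating the two phases when $\lambda\notin\mathbb{Z}$, i.e.\ when the extras beyond $\lfloor\lambda\rfloor K_n$ break regularity. These extras must on one hand be spread uniformly enough for the Hall-style lifting to go through, and on the other hand be compatible with the decomposition theorems, which are stated for $\lambda K_n - I$ rather than for arbitrary multigraphs. Most of the technical work should lie in this coordination: one approach is to choose the extras to form an edge-disjoint union of short cycles/paths that are absorbed into the prescribed lists at the outset, reducing the remainder to the $\lfloor\lambda\rfloor K_n$ (or $\lfloor\lambda\rfloor K_n - I$) situation that Theorems~\ref{thm:path} and~\ref{lem:disjoint} cover directly. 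This absorption becomes more delicate for small $k$, where the arithmetic ``defect'' $\lambda - \lfloor\lambda\rfloor$ is relatively large compared to the number of available pairs, which is plausibly why the theorem requires $n\geq 108$ for $k=3$ and progressively smaller thresholds as $k$ grows.
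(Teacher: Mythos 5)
Your high-level architecture is the same as the paper's for the main range of $k$: build an auxiliary multigraph on the vertex set with exactly $\mu\binom{n}{k}$ edges and near-constant multiplicities that decomposes into the prescribed cycles and paths, then lift it to $\mu K_n^{(k)}$ via a perfect matching in the containment bipartite graph. However, both halves of your plan have genuine gaps. For the decomposition step, Theorems~\ref{lem:disjoint} and~\ref{thm:path} apply only to complete multigraphs $\lambda K_n$ (or $\lambda K_n-I$), not to your near-regular $G$ and not to the leave that remains after ``peeling off'' the cycles: a cycle packing of $\lambda K_n-I$ leaves an uncontrolled residual multigraph, and no cited result decomposes such a residual into paths of prescribed lengths, so your step (i) does not produce an exact decomposition. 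Moreover, the claim that the admissibility conditions hold ``with plenty of slack'' is false in general: when the cycle list is dominated by $2$-cycles, the conditions $\nu_2\leq\frac{\lambda'-1}{2}\binom{n}{2}$ (for $\lambda'$ odd) or $m_1+t-2\leq\frac{\lambda'}{2}\binom{n}{2}$ (for $\lambda'$ even) can fail, and the paper must handle this regime by a separate construction that decomposes $(\lambda'-1)K_n$ or $(\lambda'-2)K_n$ entirely into $2$-cycles and places the remaining cycles in an extra $3K_n-I$ or $4K_n$ layer. The paper sidesteps your residual problem entirely by building the path part $H_P$ (multiplicities in $[\lambda,\lambda+1]$, via Bryant's path theorem applied to $\lambda K_n$ and to a simple layer, gluing one split path) and the cycle part $H_C$ (multiplicities in $[\lambda'-2,\lambda'+2]$, via the cycle decomposition and packing theorems with a case analysis on the remainder $r$) on separate complete-multigraph layers and then taking their edge-disjoint union.

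Second, your Hall verification for the non-regular lifting is incorrect as written: with multiplicities in $\{\lfloor\lambda\rfloor,\lceil\lambda\rceil\}$, double counting gives only
\[
|N(A)| \;\geq\; \frac{\mu\binom{n-2}{k-2}}{\binom{k}{2}\lceil\lambda\rceil}\,|A| \;=\; \frac{\lambda}{\lceil\lambda\rceil}\,|A| \;<\; |A|
\quad\text{when }\lambda\notin\mathbb{Z},
\]
so the inequality you assert does not follow, and ``the extras are spread evenly'' is not a substitute for an argument. This is precisely why the paper proves the technical Lemma~\ref{thm:sdr}, which verifies Hall's condition for any multigraph whose pair-multiplicities lie in a window of width at most $5$, using the Kruskal--Katona-type shadow bounds of K\"uhn and Osthus; the thresholds $n\geq 108$, $54$, $38$ come from that lemma, not from absorbing the extra edges into the prescribed lists. (Your absorption idea would at best fix the decomposition phase; the lifting must still be performed on the non-regular union, where the naive counting fails.) Finally, your proposal implicitly covers only $3\leq k\leq n-3$: the shadow-based Hall lemma breaks down for $k$ close to $n$, and the paper treats $k=n-2$ (via a proper edge-colouring of $\mu K_n$, complementation, and an SDR argument) and $k=n-1$ (via an explicit cyclic ordering of the edges) by separate constructions that your sketch does not address.
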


The organization of forthcoming sections is as follows. In Section~\ref{sec:tools}, we prove some preliminary lemmas that will be used later on. In Section~\ref{sec:packing}, we prove Theorem~\ref{lem:disjoint}. Finally, in Section~\ref{sec:hypergraph}, we apply the obtained results in previous sections to prove Theorem~\ref{thm5}.


\section{Tools}\label{sec:tools}
In this section we provide some tools required in the proof of Theorems~\ref{lem:disjoint} and \ref{thm5}. The following lemma will be used in the proof of Theorem~\ref{lem:disjoint}. It is a generalization of a result in \cite{bryant15} and is proved by similar ideas. 

\begin{lem}\label{lem:packing}
Assume that $G$ is a multigraph in which there are even edges between every pair of vertices. Also, let $\mathcal{P}$ be a cycle packing of $G$ and let $C_0\in \mathcal{P}$. Then, $|\mathcal{P}|\leq |E(G)|/2-|E(C_0)|+f(G,{\mathcal{P}})$, where $f(G,{\mathcal{P}})=2$, when $\mathcal{P}$ is a decomposition of $G$ and $f(G,{\mathcal{P}})=1$, otherwise.
\end{lem}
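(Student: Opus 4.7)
The plan is to split into the decomposition case ($f(G,\mathcal{P})=2$) and the packing case ($f(G,\mathcal{P})=1$), and then reduce the latter to the former.

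For the packing case, observe that the leave multigraph $L := G - \bigcup_{C \in \mathcal{P}} E(C)$ is nonempty and has even degree at every vertex (since both $G$ and the edge-union of $\mathcal{P}$ are even). By Veblen's theorem for multigraphs, $L$ decomposes into $\gamma \geq 1$ edge-disjoint cycles (of length at least $2$). Adjoining these cycles to $\mathcal{P}$ produces a cycle decomposition of $G$ still containing $C_0$, so the decomposition case yields $|\mathcal{P}|+\gamma\leq |E(G)|/2-|E(C_0)|+2$, and $\gamma\geq 1$ gives the required bound $|\mathcal{P}|\leq |E(G)|/2-|E(C_0)|+1$.

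For the decomposition case, set $\ell_0=|E(C_0)|$, and let $\alpha$, $\beta$ count the $2$-cycles and cycles of length at least $3$ in $\mathcal{P}\setminus\{C_0\}$; let $S$ be the total number of edges in these longer cycles. The identity $|E(G)|=\ell_0+2\alpha+S$ reduces the target inequality to
\[
S-2\beta \;\geq\; \ell_0-2.
\]
The key parity observation is that every pair $uv\in E(C_0)$ has odd residual multiplicity in $G\setminus E(C_0)$ and $2$-cycles consume parallel edges in pairs, so $uv$ appears in an odd (hence positive) number of longer cycles of $\mathcal{P}\setminus\{C_0\}$, while every $uv\notin E(C_0)$ appears in an even number. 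Writing $k_C:=|E(C)\cap E(C_0)|$ for each such $C$, this yields $\sum_C k_C\geq \ell_0$.

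I would then split on whether some longer cycle $C^{\ast}\in\mathcal{P}\setminus\{C_0\}$ is a second ``copy'' of $C_0$ (i.e.\ $k_{C^{\ast}}=|E(C^{\ast})|=\ell_0$). If so, $C^{\ast}$ contributes $\ell_0-2$ to $S-2\beta$ and every other longer cycle contributes at least $1$, so the bound follows immediately. Otherwise, each longer cycle $C$ uses at least one non-$E(C_0)$ edge, giving $|E(C)|\geq k_C+1$ and hence $S\geq \ell_0+\beta$, so that $S-2\beta\geq \ell_0-\beta$; this handles $\beta\leq 2$, while the trivial bound $S\geq 3\beta$ handles $\beta\geq \ell_0-2$. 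The hard part is the intermediate range $3\leq \beta\leq \ell_0-3$: here the parity of pair-uses on non-$E(C_0)$ pairs prevents all longer cycles from simultaneously achieving $|E(C)|=k_C+1$ in a way compatible with $\sum_C k_C=\ell_0$, since two cycles sharing a chord already exhaust both arcs of $C_0$ and any additional cycle with a single non-$E(C_0)$ edge would force an $E(C_0)$-pair to have even $b'_{uv}$. Consequently, extra non-$E(C_0)$ edges must be carried by some cycles, recovering the slack $S-2\beta\geq \ell_0-2$. This parity-driven accounting is the technical heart of the argument and follows the approach in \cite{bryant15}.
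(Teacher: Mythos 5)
Your reduction of the packing case to the decomposition case via Veblen's theorem is fine, and so is the algebraic reduction of the decomposition case to $S-2\beta\geq \ell_0-2$, together with the parity facts (each pair spanned by $C_0$ is covered an odd number of times by the cycles of length at least $3$ in $\mathcal{P}\setminus\{C_0\}$, each other pair an even number of times, and in the absence of a parallel copy of $C_0$ every such cycle satisfies $|E(C)|\geq k_C+1$). But the step you yourself flag as ``the technical heart'' --- the range $3\leq\beta\leq \ell_0-3$ --- is a genuine gap, not a routine verification. The numerical and parity constraints you have extracted do \emph{not} imply the inequality: take $\ell_0=10$ and six triangles, each using two pairs of $C_0$ and one chord pair ($k_C=2$, $x_C=1$). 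Then $\sum_C k_C=12\geq \ell_0$ with an odd-coverage pattern available at the level of counts (nine pairs once, one pair three times), the six chord slots can be matched up evenly (three chord pairs used twice each), yet $S-2\beta=6<8=\ell_0-2$. What rules this out is not the parity bookkeeping you cite but a finer structural analysis of how cycles can sit in the multigraph: a triangle meeting $C_0$ in two pairs is forced to be of the form $v_{i-1}v_iv_{i+1}$, so the chord multiplicities equal the position counts, which must then all be even, which in turn forces every $C_0$-pair to be covered evenly --- a contradiction. Your sketch (``two cycles sharing a chord already exhaust both arcs of $C_0$'') only addresses cycles made of arcs of $C_0$ plus a single chord between vertices of $C_0$; in general the cycles may leave $V(C_0)$, use several non-$C_0$ pairs, and have assorted lengths, and the deficit to be recovered can be of order $\ell_0/2$ rather than a single unit. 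So the argument as written does not prove the lemma.

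For comparison, the paper avoids this global accounting entirely: it takes a counterexample with $|E(G)|$ minimal, shows successively that $|E(C_0)|\geq 3$, that the leave is simple, and that the edges of $\mathcal{P}\setminus\{C_0\}$ induce a simple graph, whence every multiplicity of $G$ is exactly $2$; it then plays the packing off against the parallel copy $C_0'$ of $C_0$ and counts edges, reaching a contradiction in a few lines. If you want to salvage your direct approach, you would need a lemma of comparable strength to that reduction (or an induction on $|E(G)|$ that lets you delete a $2$-cycle or a pair of parallel edges lying in two cycles of the packing, which is essentially what the paper does); the purely local inequality $|E(C)|\geq k_C+1$ combined with the parity of pair-coverages is provably insufficient.
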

\begin{proof}
The claim is clear when $|\mathcal{P}|=1$. So, suppose that $|\mathcal{P}|\geq 2$. Let $R(G)=E(G)\setminus \bigcup (E(C): C\in \mathcal{P})$.
First, note that $G$ is even and thus, the induced subgraph of $ G $ on $R(G)$ is also even. Therefore, either $|R(G)|=0$ or $|R(G)|\geq 2$. 
 On the contrary, assume that $|\mathcal{P}|> |E(G)|/2-|E(C_0)|+f(G,{\mathcal{P}})$ and $|E(G)|$ is minimal subject to this property. 
First, we claim that $|E(C_0)|\neq 2$. If $|E(C_0)|=2$, then let $\mathcal{P}'=\mathcal{P}\setminus \{C_0\}$ and  $G'=G-E(C_0)$. Also, let $C'\in \mathcal{P}'$. Then 
\[|\mathcal{P}'|+1=|\mathcal{P}|> \frac{|E(G)|}{2}-2+f(G,{\mathcal{P}})\geq \frac{|E(G')|}{2}+1-|E(C')|+f(G',{\mathcal{P}'}),\]
which is in contradiction with the minimality of $G$. Therefore,  $|E(C_0)|\geq 3$. 
 If the induced subgraph of $ G $ on $R(G)$ contains a cycle $C$ of length two, then $\mathcal{P}$ is a cycle packing of  $G'=G-E(C)$ and 
 \[|\mathcal{P}|> \frac{|E(G')|}{2}-|E(C_0)|+1+f(G,{\mathcal{P}})\geq \frac{|E(G')|}{2}-|E(C_0)|+f(G',{\mathcal{P}}),\]
  which is again in contradiction with the minimality of $G$. Thus, the induced subgraph of $G$ on $R(G)$ is simple. Now, we claim that  the induced subgraph of $G$ on $E(G)\setminus (R(G)\cup E(C_0))$ is also simple. On the contrary, assume that there exist two parallel edges $e_1,e_2$, such that for every $i\in \{1,2\}$, $e_i\in C_i$, for some $C_i\in \mathcal{P}\setminus \{C_0\}$. 
Note that the induced subgraph of $ G $ on $ E(C_1)\cup E(C_2)\setminus \{e_1,e_2\} $  admits a cycle decomposition $ \mathcal{P}_0 $. Now, define $G'=G-\{e_1,e_2\}$ and $\mathcal{P}'=\mathcal{P}\cup \mathcal{P}_0\setminus \{C_1,C_2\}$ (note that it is possible that  $C_1=C_2$ and it occurs when $e_1,e_2$ form a cycle of length two in $\mathcal{P}$. In this case, $\mathcal{P}_0$ is empty). Then, $|\mathcal{P}'|\geq |\mathcal{P}|-1$ and $f(G,\mathcal{P})=f(G',\mathcal{P}')$. Therefore, 
\[|\mathcal{P}'|\geq |\mathcal{P}|-1> \frac{|E(G)|}{2}-|E(C_0)|+f(G,{\mathcal{P}})-1= \frac{|E(G')|}{2}-|E(C_0)|+f(G',{\mathcal{P}'}), \]
a contradiction with the minimality of $G$. Hence, the induced subgraphs of $ G $ on $R(G)$ and $E(G)\setminus (R(G)\cup E(C_0))$ are both simple. Thus, in $ G $, there are at most three edges between every pair of vertices and since the multiplicity of every edge of $ G $ is even, the multiplicity of every edge of $G$ is exactly two. 
  Let $C_0'$ be the cycle in $G$ of length $|E(C_0)|$  edge-disjoint from $C_0$ whose edges are parallel with the edges of $C_0$  (which exists since $|E(C_0)|\geq 3$). If $C_0'\in \mathcal{P}$, then  
 \[|E(G)|\geq 2|E(C_0)|+2(|\mathcal{P}|-2)+|R(G)|\geq 2|E(C_0)|+2|\mathcal{P}|-2f(G,\mathcal{P}),\] a contradiction (the last inequality holds since either $|R(G)|=0$ or $|R(G)|\geq 2$). Therefore, $C_0'\not\in \mathcal{P} $. Let $E'=E(G)\setminus (E(C_0)\cup E(C_0'))$. Thus,  every cycle in $\mathcal{P}\setminus\{C_0\}$ contains at least one edge in $E'$. Since the multiplicity of every edge in $E'$ is two and the induced subgraph of $ G $ on $E(G)\setminus (R(G)\cup E(C_0))$ is simple, we have  
 \[|E(G)|-2|E(C_0)|=|E'|\geq 2(|\mathcal{P}|-1)\geq 2(|\mathcal{P}|-f(G,\mathcal{P})),\]
 a contradiction. This proves the lemma. 
\end{proof}

In the proof of Theorem \ref{thm5}, a technical lemma is required whose proof is obtained by a slight modification of the ideas in \cite{kuhn}. First, we set a couple of notations.
 For integers $ n,k $, $ 0 \leq k\leq n $, the set of all $ k $-subsets of $  [n]=\{1,\ldots,n\} $ is  denoted  by $ {[  n ]} ^{(k)} $.
 For a family $  S\subseteq {[  n ]} ^{(k)} $ and an integer $ \ell $, $ 0\leq \ell\leq k $, the \textit{$ \ell $th lower shadow} of $ S $ is defined as the set $ \delta _\ell ^-{(S)}$ consisting of all  $ t\in {[  n ]} ^{(k-l)}$  such that there exists  $s\in S $ with  $ t\subseteq s $. Similarly, for a family $  S\subseteq {[  n ]} ^{(k)} $ and an integer $ \ell$, $ 0\leq \ell\leq n-k $,  the \textit{$ \ell $th upper shadow} of $ S $ is the set $  \delta _\ell ^+{(S)}$ consisting of all  $  t\in {[  n ]} ^{(k+\ell)}$ such that  there exists $  s\in S$ with $ s \subseteq t $.

We also need the following lemma from \cite{kuhn}. Note that the items (i) and (iii) are the same as Lemma~4 in \cite{kuhn} and the item (ii) is directly deduced from its proof.
\begin{lem} {\rm \cite{kuhn}}\label{lem6}
\begin{itemize}
\item[\rm (i)] Let $ k,n \in \mathbb{N} $ be such that $ 3\leq k\leq n $. Given a nonempty $  S\subseteq {[  n ]} ^{(k)}$, define $ s\in \mathbb{R} $ by ${|S| =\binom{s}{k} } $. Then $ {|\delta _{k-2} ^-{(S)}|\geq \binom{s}{2}}$.

\item[\rm (ii)] Suppose that $ {S' \subsetneq {[n]}^{(2)} } $ and let $ c,d \in \mathbb{N}\cup \{0\} $ be such that $ c<n , d< n-(c+1) $ and $ |S'| =  cn- \binom{c+1}{2} +d$. Then $|\delta _1 ^+{(S')}|\geq c \binom{n-c}{2}  +d(n-c-2)-\binom{d}{2} $.

\item[\rm (iii)] If ${ S' \subseteq {[n]}^{(2)} } $ and $ {|S'| \leq n-1  }$, then ${ \delta _2 ^+{(S')} \geq \vert S' \vert \binom{n-|S'| -1}{2} + \binom{|S'|}{2} (n-|S'| -1)}$.
\end{itemize}
\end{lem}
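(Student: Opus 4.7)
The three bounds are Kruskal--Katona type shadow estimates, and the plan is to prove each by invoking the Kruskal--Katona theorem (in the real-parameter form due to Lov\'asz) and then computing the shadow on an appropriate extremal configuration.

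For (i), the Lov\'asz form asserts that if $T\subseteq \binom{[n]}{k}$ with $|T|=\binom{s}{k}$ for a real $s\geq k$, then $|\delta_1^-(T)|\geq \binom{s}{k-1}$. Starting from $S$ and iterating this inequality $k-2$ times gives $|\delta_{k-2}^-(S)|\geq \binom{s}{2}$, which is exactly the required claim.

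For (ii) and (iii), I would view $S'$ as the edge set of a graph on $[n]$, so that $\delta_\ell^+(S')$ is the collection of $(2+\ell)$-subsets of $[n]$ containing at least one edge of $S'$. The Kruskal--Katona theorem applied on the upper-shadow side (equivalently, to the family of $(n-2)$-complements of $S'$) shows that among all $S'$ with a prescribed edge count, the upper shadow is minimized by the initial segment of the colex order on $[n]^{(2)}$. Under the cardinality constraint in (ii), this initial segment consists of all edges incident to $\{1,\dots,c\}$ together with $d$ further edges $\{c+1,c+2\},\dots,\{c+1,c+1+d\}$; in (iii), the constraint $|S'|\leq n-1$ forces the initial segment to be (a subgraph of) a star. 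One then establishes each lower bound directly on this extremal configuration by an easy superset count: in (ii), a clean split yields $c\binom{n-c}{2}$ triples meeting $[c]$ in a single vertex plus a further $d(n-c-2)-\binom{d}{2}$ triples of the form $\{c+1,x,y\}$ that use one of the $d$ extra edges (and the remaining triples meeting $[c]$ in two or more points are simply discarded, so the bound is genuinely a lower bound rather than an equality); in (iii), counting 4-subsets of a star by the number of leaves they contain gives exactly $|S'|\binom{n-|S'|-1}{2}+\binom{|S'|}{2}(n-|S'|-1)$.

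The main technical obstacle is justifying that the colex-initial configuration really minimizes the upper shadow in the precise forms required by (ii) and (iii). This is the standard ``dual'' side of Kruskal--Katona and follows from the usual compression/shifting argument, but one must match the cascade parameters appearing in the given statement rather than use a single binomial term. Once this extremality is in hand, the remaining work is routine binomial bookkeeping to verify that the counts on the extremal configurations collapse to the stated right-hand sides.
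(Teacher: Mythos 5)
You should first note that the paper itself contains no proof of this lemma: it is imported from \cite{kuhn} (items (i) and (iii) are their Lemma~4, and item (ii) is stated there as extracted from that lemma's proof), so there is no in-paper argument to match, and your proposal has to be judged on its own. On that basis it is essentially correct. Part (i) is the standard route: the Lov\'asz real-parameter form of Kruskal--Katona applied $k-2$ times, the iteration being legitimate because the bound is monotone in the size of the family and because the $(k-2)$-fold $1$-shadow is exactly $\delta_{k-2}^-$. For (ii) and (iii) your plan works, and in fact the ``main technical obstacle'' you flag is not an obstacle: complementation $s\mapsto[n]\setminus s$ is a bijection under which $t\in\delta_\ell^+(S')$ if and only if $[n]\setminus t\in\delta_\ell^-(\overline{S'})$, so $|\delta_\ell^+(S')|=|\delta_\ell^-(\overline{S'})|$ exactly, and the extremality you need is literally the Kruskal--Katona theorem (colex initial segments minimize the shadow, and since the shadow of a colex initial segment is again a colex initial segment, they minimize iterated shadows as well); no ``cascade parameter matching'' is required, because you never use the numerical cascade form of the theorem --- you only lower-bound the shadow of the one explicit extremal family, discarding surplus terms. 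Your counts on that family check out: for (ii), with $c$ full stars plus $d$ edges at the next vertex, the triples meeting $[c]$ in exactly one vertex give $c\binom{n-c}{2}$, and the triples avoiding $[c]$ that use one of the $d$ extra edges give $\binom{n-c-1}{2}-\binom{n-c-1-d}{2}=d(n-c-2)-\binom{d}{2}$, disjointly from the first family; for (iii), the constraint $|S'|\le n-1$ makes the extremal family a substar, whose $2$-upper shadow is $|S'|\binom{n-|S'|-1}{2}+\binom{|S'|}{2}(n-|S'|-1)+\binom{|S'|}{3}$, which dominates the stated bound. So the only remaining work is to write out the routine complementation/Kruskal--Katona step you already identified; there is no gap in the plan, and compared with simply citing \cite{kuhn} as the paper does, your argument has the merit of being self-contained modulo the classical Kruskal--Katona theorem.
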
 

Now, we are ready to prove the following technical lemma.
 \begin{lem}\label{thm:sdr} Let $k,n\in \mathbb{N}$ be such that $3\leq k\leq n-3 $.
 Also, let $\mu,\lambda_1,\lambda_2\in \mathbb{N}$ be such that ${0\leq \lambda_2 -\lambda_1 \leq 5}$.
 Suppose that $H$ is a multigraph on $n$ vertices with $\mu \binom{n}{k}$ edges  such that for every pair of distinct vertices $ x,y\in V(H) $, the multiplicity of $xy$ satisfies $ { \lambda_1 \leq m_{H}(xy)  \leq \lambda_2} $.  Let $G=(A,B)$ be a bipartite graph such that the part $ A $ is the set of all the edges of $ {\mu K^{(k)}_n }$ and the part $ B $ is the set of all edges of $ H $. Also, a vertex $ K\in A $ is adjacent to a vertex $ xy\in B $  in $ G $ if and only if $ \{x,y\}\subseteq K$. If  either $k= 3$ and $ n\geq 108$, or $k= 4$ and $ n\geq 54 $, or $ k\geq 5 $ and $ n\geq 38 $, then $ G $ contains a perfect matching.
 \end{lem}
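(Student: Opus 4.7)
The strategy is Hall's marriage theorem on $G$. Since $|A|=|B|=\mu\binom{n}{k}$, a perfect matching exists if and only if $|N_G(S)|\ge|S|$ for every $S\subseteq B$. Fix such an $S$, and let $S'\subseteq[n]^{(2)}$ denote the set of pairs underlying the (multi-)edges of $S$. By construction, a hyperedge-copy $K\in A$ lies in $N_G(S)$ exactly when its underlying $k$-set belongs to the upper shadow $\delta_{k-2}^+(S')$, and then all $\mu$ copies of that $k$-set are neighbors of $S$. Consequently
\[
|N_G(S)|=\mu\,|\delta_{k-2}^+(S')|\qquad\text{and}\qquad |S|\le\sum_{xy\in S'}m_H(xy)\le\lambda_2|S'|.
\]
Summing $m_H(xy)\ge\lambda_1$ over all $\binom{n}{2}$ pairs gives $\lambda_1\binom{n}{2}\le\mu\binom{n}{k}$, so with $\lambda_2-\lambda_1\le5$ we obtain $\lambda_2\le\mu\binom{n}{k}/\binom{n}{2}+5$. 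It therefore suffices to prove
\[
\mu\,|\delta_{k-2}^+(S')|\ge\Bigl(\tfrac{\mu\binom{n}{k}}{\binom{n}{2}}+5\Bigr)|S'|.
\]

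I would verify this inequality by splitting on $|S'|$. If $|S'|=\binom{n}{2}$ then $\delta_{k-2}^+(S')=[n]^{(k)}$, so the sharper bound $|S|\le|E(H)|=|B|=|N_G(S)|$ already gives the conclusion. If $|S'|\le n-1$, the shadow grows linearly: for $k=3$ use Lemma~\ref{lem6}(ii) with $c=0$ (and $c=1$, $d=0$ when $|S'|=n-1$); for $k=4$ use Lemma~\ref{lem6}(iii); for $k\ge5$ iterate the identity $\delta_{\ell+1}^+(S')=\delta_1^+(\delta_\ell^+(S'))$ starting from $\delta_2^+(S')$. In every case one obtains a bound of order $\Theta\!\bigl(|S'|\binom{n-|S'|-1}{k-2}\bigr)$, which for the numerical ranges of $n$ in the hypothesis comfortably dominates the right-hand side. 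In the intermediate range $n\le|S'|\le\binom{n}{2}-1$, write $|S'|=cn-\binom{c+1}{2}+d$ with $0\le d<n-c-1$; Lemma~\ref{lem6}(ii) gives
\[
|\delta_1^+(S')|\ge c\binom{n-c}{2}+d(n-c-2)-\binom{d}{2},
\]
which is lifted to $\delta_{k-2}^+$ by a routine induction applying the same upper-shadow-growth argument (each pair inside a $j$-set has $n-j$ extensions to a $(j{+}1)$-set, with the usual overcount control).

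The main obstacle is the upper end of the intermediate range, namely $c$ close to $n-1$ and $|S'|$ close to $\binom{n}{2}$: there the ratio $|\delta_{k-2}^+(S')|/|S'|$ is only just above $\binom{n}{k}/\binom{n}{2}$, so the additive slack of $5|S'|/\mu$ is most delicate. The numerical hypotheses $n\ge108$ for $k=3$, $n\ge54$ for $k=4$, and $n\ge38$ for $k\ge5$ are calibrated precisely so that Lemma~\ref{lem6}(ii) delivers just enough slack there. Equivalently, when the complement $\overline{S'}=[n]^{(2)}\setminus S'$ satisfies $|\overline{S'}|<n$, one passes to $\overline{S'}$ and observes that the missing $k$-sets $[n]^{(k)}\setminus\delta_{k-2}^+(S')$ are exactly those whose $\binom{k}{2}$ pairs are all in $\overline{S'}$, a collection whose size is controlled from above by Lemma~\ref{lem6}(i) applied to the family $\overline{S'}$ viewed via Kruskal--Katona; this finishes the verification of Hall's condition.
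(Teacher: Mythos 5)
Your overall skeleton (Hall's condition plus the shadow bounds of Lemma~\ref{lem6}) is the same as the paper's, but there is a genuine gap in the dense regime, and it is exactly the regime you identify as ``the main obstacle.'' Your reduction replaces $|S|$ by $\lambda_2|S'|$ and then asks for $\mu\,|\delta_{k-2}^+(S')|\ge\bigl(\mu\binom{n}{k}/\binom{n}{2}+5\bigr)|S'|$. This target inequality is simply false whenever $|S'|$ is close to $\binom{n}{2}$, because the left-hand side is capped at $\mu\binom{n}{k}$ while the right-hand side exceeds $\mu\binom{n}{k}$ as soon as $|S'|>\binom{n}{2}\big/\bigl(1+5\binom{n}{2}/(\mu\binom{n}{k})\bigr)$. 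For instance, with $k=3$, $n=108$, $\mu=1$ (so one may take $\lambda_1=31$, $\lambda_2=36$) and $|S'|=\binom{n}{2}-1$, the upper shadow is all of $[n]^{(3)}$, yet $\lambda_2|S'|=36\cdot 5777>\binom{108}{3}$; the inequality you aim for fails for roughly the top ten percent of values of $|S'|$, not just at the single point $|S'|=\binom{n}{2}$ that you treat separately. No lower bound on the shadow --- Lemma~\ref{lem6}(ii) or any Kruskal--Katona-type estimate --- can rescue this, so your claim that the numerical hypotheses are ``calibrated precisely so that Lemma~\ref{lem6}(ii) delivers just enough slack there'' is not correct: the problem is the lossy estimate $|S|\le\lambda_2|S'|$, not the shadow bound.

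What is missing is a sharper count of $|S|$ in the dense regime: when $\overline{S'}=[n]^{(2)}\setminus S'$ is small one must use $|S|\le \mu\binom{n}{k}-\lambda_1|\overline{S'}|$ (every pair outside $S'$ still carries at least $\lambda_1$ edges of $H$ that are not in $S$) and compare $\lambda_1|\overline{S'}|$ with $\mu$ times the number of $k$-sets missed by the upper shadow. Equivalently, this is the paper's device: since $|A|=|B|$, it verifies Hall on the $A$-side and, in the dense case, passes to the complement $S'=B\setminus N_G(S)$, using $N_G(S')\subseteq A\setminus S$ together with the upper-shadow bounds of Lemma~\ref{lem6}(ii),(iii) applied to the now \emph{small} set of pairs. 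Your closing paragraph gestures at the complement, but it only bounds the number of missing $k$-sets (and cites Lemma~\ref{lem6}(i), which is a lower bound on lower shadows, for an upper bound it does not state); it never repairs the overcount of $|S|$, so the verification of Hall's condition does not go through as written. The small and intermediate ranges of $|S'|$ are plausible along your lines (though the ``routine induction'' lifting $\delta_1^+$ to $\delta_{k-2}^+$ for $k\ge5$ is left vague; the paper instead uses a normalized lower-shadow estimate, the inequality $b\ge a^{2/k}$), but the dense range needs the missing idea above.
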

  \begin{proof}
 We prove the claim by verifying Hall's condition  for $G$. Consider a nonempty set $ S\subseteq A $. We prove that $ |N_G(S)|\geq |S| $.
   Let $ \alpha =\lambda_2-\lambda_1 $ and $ S_1=S\cap E(K_{n}^{(k)}) $. Define $ s_1 \in \mathbb{R}$ with $ k\leq s_1\leq n $ by $ |S_1|=\binom{s_1}{k}. $
Since for every pair of distinct vertices $x,y\in V(H)$, $ { \lambda_1 \leq m_{H}(xy)  \leq \lambda_2} $, we have 
 \begin{equation} \label{eq:mu}
   \lambda_1\binom{n}{2}  \leq |E(H)|=\mu \binom{n}{k}  \leq \lambda_2 \binom{n}{2}.
 \end{equation}
We consider the following cases.\\

\textbf{Case 1.} $k=3$. Due to the definition of the graph $ G $, Lemma~\ref{lem6}(i) and \eqref{eq:mu}, we have

 \[ {| N_G{(S)}|=| N_G{(S_1)}|\geq \lambda_1  \big| N_G{(S_1)} \cap E( K_n)\big| \geq \big( \dfrac{n-2}{3}\mu-\alpha\big) \binom{s_1}{2}}.\]
  If $ {s_1\leq n-3\alpha/\mu } $, then
  \[| N_G{(S)}|\geq {( \dfrac{n-2}{3}\mu-\alpha) \binom{s_1}{2} \geq \mu \binom{s_1}{3} } \geq |S|, \]
  and we are done. Now, assume that ${ s_1> n-3\alpha/\mu }   $.


 Suppose that $S'=B\setminus N_G(S)\not=\emptyset$ and $S'_{1}=S'\cap E(K_n)\not=\emptyset$.
 By Lemma~\ref{lem6}(i), we have

\begin{align*}
 \rvert S_{1}'\rvert &=\binom{n}{2} -\big| N_G{(S_1)} \cap E( K_n) \big| \leq \binom{n}{2}- \binom{s_1}{2} <  \binom{n}{2} -  \binom{n-3\alpha/\mu}{2} \\
 &\leq \binom{n}{2}- \binom{n-3\alpha}{2} \leq 3\alpha n - \dfrac{9\alpha ^2+3\alpha}{2}.
 \end{align*}

Let  $ c,d \in \mathbb{N}\cup \{0\} $ be such that $ c<n , d< n-(c+1) $ and ${ |S_1'| =  cn- \binom{c+1}{2} +d}$ (such numbers always exist, to see this it is enough to take $ c $ as the maximum number $ e $ satisfying $en- \binom{e+1}{2}\leq  |S_1'| $). Thus,
 $$ {cn- \binom{c+1}{2}  +d  <3\alpha n - \dfrac{9\alpha ^2+3\alpha}{2}}, $$
 and hence,
 $$ {(c-3\alpha ) (c-2n+3\alpha+1) >0}.$$
  This implies that  either $ c<3\alpha $ or $ c> 2n-3\alpha -1 $. Since $ c<n $, $\alpha\leq 5$ and $n\geq 16$, the former case occurs and we have  $ c\leq 3\alpha-1 $.
 By lemma~\ref{lem6}(ii),
  \begin{equation}\label{eq:1}
 {\arrowvert N_G{(S')}\arrowvert =  \arrowvert N_G{(S_{1}')}\arrowvert \geq \mu \left(c \binom{n-c}{2} +d(n-c-2)- \binom{d}{2}\right)}.
  \end{equation}
 Now, assume that the following two inequalities hold.
 \begin{equation}\label{eq:2}
   \mu c  \binom{n-c}{2}  > (\dfrac{n-2}{3}\mu +\alpha) cn,
  \end{equation}
 \begin{equation}\label{eq:3}
    \mu(d(n-c-2)- \binom{d}{2}) > (\dfrac{n-2}{3}\mu +\alpha) d.
  \end{equation}
Then, by \eqref{eq:1}, \eqref{eq:2} and \eqref{eq:3}, we have

\[ |N_G(S')| > (\dfrac{n-2}{3}\mu +\alpha)(cn+d)\geq \lambda_2 \rvert S_{1}'\rvert \geq \rvert S' \rvert, \]
and therefore, $ |N_G(S)|\geq |S| $. On the other hand, note that since $ c\leq 3\alpha -1 $, \eqref{eq:2} holds if
 \[ \mu \binom{n-3\alpha+1}{2}  > (\dfrac{n-2}{3}\mu +\alpha)n,
 \]
 and then if
\[{ n^{2} +(-24\alpha+7)n+27\alpha^{2}-9\alpha} >0,
\]
which holds for $ n\geq  108$ and $ \alpha \leq 5 $.
 Also, since $ d< n-c-1 $, \eqref{eq:3}  holds if
 $ \mu({n-c-2})/{2} > ({n-2})\mu/3 +\alpha $ and since $ c\leq 3\alpha-1 $, it holds if $ {n\geq 15\alpha-1}$. 
 Hence, in this case, Hall's condition holds and $G$ contains a perfect matching. \\

 \textbf{Case 2. } $k=4. $ By Lemma~\ref{lem6}(i) and \eqref{eq:mu}, we have

\[ |N_G{(S)}|=|N_G{(S_1)}| \geq \lambda_1 | N_{G}{(S_1)} \cap E( K_n) | \geq (\mu \frac{(n-2)(n-3)}{12}-\alpha) \binom{s_1}{2}.\]
Note that if
\begin{equation}\label{eq:4}
(\mu \dfrac{(n-2)(n-3)}{12}-\alpha  )\binom{s_1}{2} \geq \mu \binom{s_1}{4},
\end{equation}
then $|N_G{(S)}| \geq \mu \binom{s_1}{4} \geq |S|$ and we are done. Inequality~\eqref{eq:4} holds if
$ s_{1}^2-5s_{1}-n^2+5n+12\alpha \leq 0 $ and then if $ s_{1}\leq \frac{1}{2}(5+\sqrt{{(2n-5)}^2 -48\alpha })$.

Now, suppose that $ s_1> \frac{1}{2}({5+\sqrt{{(2n-5)}^2 -48\alpha }})$. We may also assume that $S'=B\setminus N_G(S)\not=\emptyset$ and $S'_{1}=S'\cap E(K_n)\not=\emptyset$. Therefore,
\[
|S_{1}'| =\binom{n}{2} -\big| N_G{(S_1)} \cap E( K_n) \big| \leq \binom{n}{2}- \binom{s_1}{2} <  \binom{n}{2} -  \binom{\frac{ 5+\sqrt{{(2n-5)}^2 -48\alpha} }{2}}{2} \leq 32,
\]
where the last inequality holds for $ n\geq 33 $ and $ \alpha \leq 5$. Thus, $ { |S_{1}'| \leq 31}$. By Lemma~\ref{lem6}(iii), we have
\[
{ |N_G(S')|=|N_G{(S_{1}')}| \geq \mu |S_{1}'| \binom{n-32}{2}\geq  \mu \dfrac{|S'|}{\lambda_2}  \binom{n-32}{2} }\geq \binom{n-32}{2}\dfrac{|S'|}{\dfrac{(n-2)(n-3)}{12}+\dfrac{\alpha}{\mu}} \geq |S'|,
\]
where the last inequality holds for $ n\geq 54 $ and $ \alpha\leq 5 $. Therefore, $ |N_G(S)|\geq |S|$ and Hall's condition holds and thus, $ G $ contains a perfect matching.\\

 \textbf{Case 3. } $ 5\leq k\leq n-3. $

First, note that since $ k\geq 5$, for every $ K\in B $, $|{ N_G(\{K\})\cap E(K_n^{(k)})|= \binom{n-2}{k-2}\geq \binom{n-2}{3} \geq 2 \alpha \binom{n}{2}}$, where the last inequality holds for $ n\geq 38 $ and $ \alpha\leq 5 $. Thus, if ${\vert  S_1 \vert   > \binom{n}{k} - 2 \alpha \binom{n}{2}}$, then for every $ K\in B $, $ N_G(\{K\})\cap S_1\neq \emptyset $ and so $ {N_G(S)=B }$. Hence, in this case, clearly $ |N_G(S)|\geq |S| $ and we are done. 

Now, suppose that ${\vert  S_1 \vert \leq \binom{n}{k} - 2 \alpha \binom{n}{2}}$. 
Define $ a \in \mathbb{R}$ with $ 0< a\leq 1 $ such that $ |S_1| =a \binom{n}{k}= \binom{s_1}{k} $. Also, define $b\in \mathbb{R}$ such that $  | N_G{(S_1)} \cap E( K_n) | =b\binom{n}{2}$ and let $ g={\alpha \binom{n}{2} }/ \binom{n}{k}  $. By Lemma~\ref{lem6}(i), we have $ b\binom{n}{2}\geq \binom{s_1}{2} $. Therefore,
 \[ \frac{b^k}{a^2}\geq \frac{\binom{s_1}{2}^k\binom{n}{k}^2}{\binom{n}{2}^k\binom{s_1}{k}^2}\geq 1.\]
Note that the above inequality follows from the fact that the function $ \frac{x^2 (x-1)^2 \ldots (x-k+1)^2}{x^k (x-1)^k} $ is increasing when $ x\geq k $.
Therefore,  $ b\geq a^{2/k}$. Since ${ |S_1| \leq \binom{n}{k} - 2 \alpha \binom{n}{2}}$, we have
 \[ a \binom{n}{k}=|S_1| \leq \binom{n}{k} - 2\alpha \binom{n}{2}= (1-2g) \binom{n}{k} \leq  {(1-g)}^{2} \binom{n}{k}, \]
  and this implies that ${ a^{1/2}\leq 1-g }$ and since $ k\geq 4 $ and $ 0<a\leq 1 $, we have $  {a^{1-{2/k}} \leq  a^{1/2}\leq 1-g }$.
 Hence, by the definition of $ G $,
 \begin{align*}
   |N_G{(S)}|&=|N_G(S_1)| \geq \lambda_1 \vert N_G{(S_1)} \cap E( K_n) \vert =\lambda_1 b \binom{n}{2}  \geq \lambda_1 \binom{n}{2}  a^{2/k} \\
 &\geq    (\mu \binom{n}{k} -\alpha \binom{n}{2}) a^{2/k}\geq  \mu \binom{n}{k}  (1-g) a^{2/k}
 \geq \mu \binom{n}{k}  a^{1-2/k} a^{2/k }=\mu |S_1|\geq | S |.
 \end{align*}
Consequently, Hall's condition holds and thus, $ G $ contains a perfect matching, as desired.
  \end{proof}

\section{Proof of Theorem~\ref{lem:disjoint}} \label{sec:packing}

\textit{Proof of necessity.} Let $\mathcal{P}$ be an $M$-cycle packing for $G=\lambda K_n-I$ and $R(G)=E(G)\setminus \bigcup (E(C): C\in \mathcal{P})$. Since the induced subgraph of $G$ on the edges of the members of $ \mathcal{P}$ is even, the induced subgraph of $G$ on $R(G)$ is also even. Therefore, $r=|R(G)|\neq 1$ and if $r=|R(G)|=2$, then $R(G)$ contains two parallel edges. The condition (i) is trivial. Now, let $\lambda$ be odd. For every pair of distinct vertices $u,v$, among $\lambda$ parallel edges between $u$ and $v$, at least one edge does not contribute in the cycles of length two in $\mathcal{P}$. Thus, $2 \nu_2(M)\leq  (\lambda -1)\binom{n}{2}$. Now, if $r=2$, then, assuming $M'=(m_1,\ldots, m_t,2)$, $G$ has an $M'$-cycle decomposition and thus, using Theorem~\ref{thm2}, we have $2 \nu_2(M)<2 \nu_2(M')\leq  (\lambda -1)\binom{n}{2}$. When $\lambda$ is even, the condition (iii) immediately follows from Lemma~\ref{lem:packing}.

\textit{Proof of sufficiency.} For the case $r=0$, the assertion immediately follows from Theorem~\ref{thm2}. Now, suppose that $r\neq 0$. First, assume that $\lambda$ is odd. If $r=2$, then let $M'=(m_1,\ldots, m_t, 2)$. Since $\lambda$ is odd and $ 2 \nu_2(M)<  (\lambda -1)\binom{n}{2}$, we have $2\nu_2(M')= 2 \nu_2(M)+2\leq  (\lambda -1)\binom{n}{2}$. On the other hand, $\sigma(M')=f(\lambda,n)$. Hence, $M'$ is a $(\lambda,n)$-admissible list and by Theorem~\ref{thm2}, $\lambda K_n- I$ admits an $M'$-cycle decomposition. Now, removing a cycle of length two from this decomposition yields an $M$-cycle packing for $\lambda K_n -I$. If $r\geq 3$, then there exist some integers $m_{t+1}, \ldots, m_{t+s}\in \{3,4,5\}$, such that  $m_{t+1}+\cdots+m_{t+s}=r$. Let $M'=(m_1,\ldots, m_{t+s})$. Thus, $2 \nu_2(M')=2 \nu_2(M)\leq  (\lambda -1)\binom{n}{2}$ and $\sigma(M')=f(\lambda,n)$. Therefore, $M'$ is a $(\lambda,n)$-admissible list and by Theorem~\ref{thm2}, $\lambda K_n- I$ admits an $M'$-cycle decomposition. Now, removing the cycles of lengths $m_{t+1},\ldots, m_{t+s}$ from this decomposition yields the desired packing. 

Now, assume that $\lambda$ is even. 
If $r\leq m_1$, then define $m_{t+1}=r$ and $M'=(m_1,\ldots, m_{t+1})$.  Since $\sigma(M')=f(\lambda,n)$, $\max\{m_1,\ldots, m_{t+1}\}=m_1$ and $ m_1+(t+1)-2\leq ({\lambda}/{2})\binom{n}{2}$, $M'$ is a $(\lambda,n)$-admissible list and by Theorem~\ref{thm2}, $\lambda K_n$ admits an $M'$-cycle decomposition. Again, removing the cycle of length $m_{t+1}$ yields the desired packing. 
If $r\geq m_1+1$, then let  $m_{t+1}$ be a number in the set $\{m_1-1,m_1,m_1+1\}$ such that $2\leq m_{t+1}\leq n$ and $r-m_{t+1}$ is even (we leave the reader to check that such a number exists). Also, define $M'=(m_1,\ldots, m_{t+s})$ such that $ s\geq 1 $,   $m_{t+2}=\cdots=m_{t+s}=2$ and $\sigma(M')=f(\lambda,n)=\lambda \binom{n}{2}$. 
Hence,
\[\lambda \binom{n}{2}=\sigma(M') \geq m_1+m_{t+1}+2(t+s-2).\]
Since $\lambda$ is even,
\[ \frac{\lambda}{2}\binom{n}{2}\geq \left\lceil \frac{m_1+m_{t+1}}{2}\right \rceil+t+s-2=\max\{m_1,\ldots,m_{t+s}\}+t+s-2.\]
This implies that $M'$ is a $(\lambda,n)$-admissible list and thus, again by Theorem~\ref{thm2},  there is an $M'$-cycle decomposition of $\lambda K_n$. Removing the cycles of lengths $m_{t+1},\ldots, m_{t+s}$ from this decomposition yields the desired collection.

 \section{Proof of Theorem~\ref{thm5}}\label{sec:hypergraph}


 Let  $ P=(m_1,\ldots,m_s) $ and $ C=( n_1,\ldots,n_t) $ be the lists of integers satisfying the conditions of Theorem~\ref{thm5}. Without loss  of generality, we may assume that  $ C$ and $P $  are in a non-increasing order. \\

 \textbf{Case 1.} $3\leq k\leq n-3$.
Let $ \lambda $ and $\lambda'$  be  two integers such that
 \begin{equation}
  \lambda \binom{n}{2} \leq m_1+\cdots +m_s < (\lambda +1) \binom{n}{2}, \label{eq:6} 
\end{equation}
and
\begin{equation}
\lambda' \binom{n}{2} \leq n_1+\cdots +n_t < (\lambda' +1) \binom{n}{2}.
\end{equation}

We prove that there exist two multigraphs $H_P$ and $H_C$  on the vertex set $\{1,\ldots, n\}$ such that $H_P$ (resp. $H_C$) admits a $P$-path decomposition (resp. a $C$-cycle decomposition) and also, for every pair of distinct vertices $x,y$, the multiplicity of the edge $ xy $ in $ H_P $ (resp. $ H_C $) satisfies $\lambda\leq m_{H_P}(xy)\leq \lambda+1$ (resp. $\lambda'-2 \leq m_{H_C}(xy)\leq \lambda'+2$). First, assume that such multigraphs exist.  Now, let $H$ be the edge-disjoint union of $H_P$ and $H_C$ which is a multigraph on the vertex set $\{1,\ldots,n\}$. It is evident that $|E(H)|=\mu \binom{n}{k}$ and the edges of $H$ can be decomposed into $s$ paths of lengths $m_1,\ldots, m_s$ and $t$ cycles of lengths $n_1, \ldots, n_t$. On the other hand, for every pair of distinct vertices $x,y$, we have $\lambda+\lambda'-2\leq m_H(xy)\leq\lambda+\lambda'+3$. Therefore, by Lemma~\ref{thm:sdr}, there is a one-to-one correspondence $\eta: E(H)\to E(\mu K_n^{(k)}) $ such that $e\subset \eta(e)$, for every $e\in E(H)$. Hence, $\eta $ maps every path or cycle in  $H$ into a Berge path or a Berge cycle of the same length in $\mu K_n^{(k)}$. Consequently, $ \eta $ induces a desired decomposition of $\mu K_n^{(k)}$ into appropriate Berge paths and cycles and the proof completes. In the sequel, we show how one can construct the multigraphs $ H_P $ and $ H_C $ with the desired properties. \\

\textbf{Construction of $H_P$.} \\
Choose $ s_0$ to be the largest  integer such that $  m_1+\cdots+m_{s_0 }\leq \lambda\binom{n}{2}$ and let $q=\lambda\binom{n}{2} -(m_1+\cdots+m_{s_0 })$. If $ s_0<s $, then clearly $q<m_{s_0+1}$ and let $q'=m_{s_0+1}-q$. Also, if $ s_0=s $, then $ q=0 $ and let $ q'=0 $.  Define $P'=(m_1,\ldots, m_{s_0},q)$ and $P''=(q',m_{s_0+2},\ldots, m_s)$. It is clear that $m_1+\cdots+ m_{s_0}+q=\lambda \binom{n}{2}$ and $q'+m_{s_0+2}+\cdots+ m_s<\binom{n}{2}$. Thus, by Theorem~\ref{thm:path}, the multigraph $H_1= \lambda K_n$ on the vertex set $\{1,\ldots, n\}$ admits a $P'$-path decomposition $ \mathcal{P}' $ and there is a subgraph $H_2$ of $K_n$ on the vertex set $\{1,\ldots, n\}$ which admits a $P''$-path decomposition $ \mathcal{P}'' $. By renaming the vertices of $H_2$, we may assume that the path of length $q$ in $ \mathcal{P}' $ and the path of length $q'$ in $ \mathcal{P}'' $ have an edge-disjoint union equal to a path of length $q+q'=m_{s_0+1}$.  Now, define the multigraph $H_P$ as the edge-disjoint union of $H_1$ and $H_2$. Evidently, $H_P$ admits a $P$-path decomposition and for every pair of distinct vertices $x,y$, we have $\lambda\leq m_{H_P}(xy)\leq \lambda+1$.\\

\textbf{Construction of $H_C$.} \\
If $\lambda'(n-1)$ is odd, then let $I\subset E(\lambda' K_n)$  be a perfect matching of $\lambda'K_n$ and otherwise, let $I$ be the empty set. Also, let $f(\lambda',n)$ be the number of edges of $\lambda' K_n-I$. 
Let $ t_0$ be the largest  integer such that $  n_1+\cdots+n_{t_0 }\leq f(\lambda',n)$ and let $r=  f(\lambda',n) -(n_1+\cdots+n_{t_0 })$. If $ t_0<t $, then clearly  $ 0  \leq r <n_{t_0+1} $ and if $ t_0=t $, then $ r=0 $.
Now, define the list of integers $M$ as follows.

\begin{itemize}
\item[(i)] If $ r=0 $,  then let  $ M=(n_1,\ldots,n_{t_0}) $.
\item[(ii)] If $ r=1 $, then let $M=(n_1,\ldots,n_{t_0-1},n_{t_0}+1 )$. Note that in this case,  $ n_{t_0} < n $ (since if $ n_{t_0}=n $, then $ n_1=\cdots=n_{t_0}=n $ and so $ r $ is a multiple of $ n $).
\item[(iii)] If $ r\geq 2 $, then  let  $ M=(n_1,\ldots,n_{t_0},r) $. Note that in this case, $ \nu_2(M)\leq 1 $ (because $ n_{t_0}\geq n_{t_0+1}>r\geq 2 $).
\end{itemize}

It is clear that $ \sigma(M)= f(\lambda',n) $. Now, we construct $ H_C $ in the following two cases. \\

\textbf{Case 1.1.}  $ \lambda' $ is odd.

First, suppose that $ \nu_2(C) \geq \frac{\lambda'-1}{2}\binom{n}{2} $.
Then, the complete multigraph $H_3=(\lambda'-1) K_n$ obviously admits a decomposition into $\frac{\lambda'-1}{2}\binom{n}{2}$ cycles of length two. Let $t'=t-\ds\frac{\lambda'-1}{2}\binom{n}{2}$. Since
\[\sum_{i=1}^{t'} n_i\leq (\lambda'+1) \binom{n}{2}-\sum_{i=t'+1}^t n_i=2 \binom{n}{2},  \]
by Theorem~\ref{lem:disjoint}, the multigraph $ 3K_n-I $ contains $t'$ pairwise edge-disjoint cycles of lengths $n_1,\ldots,n_{t'}$. Now, let $H_4$ be the union of these cycles. Also, let $H_C$ be the edge-disjoint union of $H_3$ and $H_4$. Hence, for every pair of distinct vertices $x,y$, we have $\lambda'-1\leq m_{H_C}(xy)= m_{H_3}(xy)+m_{H_4}(xy)\leq 3+\lambda'-1=\lambda'+2$. Also, $H_C$ admits a $C$-cycle decomposition.

Now, suppose that  $ \nu_2(C) <\frac{\lambda'-1}{2}\binom{n}{2} $. Therefore, $  \nu_2(M) \leq \frac{(\lambda' -1)}{2}\binom{n}{2} $, and so $M$ is $(\lambda', n)$-admissible. Then, by Theorem~\ref{thm:cycle}, $ \lambda' K_n-I $ admits an $ M $-cycle decomposition. If $r\geq 2$ (resp. $r=1$), then let $F$ be the edge set of a cycle of length $r$ (resp. $n_{t_0}+1$) in the $M$-cycle decomposition of $\lambda'K_n$ (if $r=0$, then let $F$ be the empty set). Let $H_3$ be the multigraph obtained from $\lambda'K_n-I$ by removing all edges in $F$. Clearly, for every pair of distinct vertices $x,y$, we have $\lambda'-2\leq m_{H_3}(xy)\leq \lambda'$.
On the other hand,
\begin{align*}
\sum_{i=t_0}^{t} n_i &\leq 2n+ \sum_{i=t_0+2}^{t} n_i\leq 2n+(\lambda'+1)\binom{n}{2}- \sum_{i=1}^{t_0+1} n_i \\ &\leq 2n+(\lambda'+1)\binom{n}{2}-f(\lambda',n)= 2n+\binom{n}{2}+|I|<2\binom{n}{2}-2n,
\end{align*}
where the last inequality holds for $ n\geq 11 $. Hence, by Theorem~\ref{lem:disjoint}, there are $t-t_0+1$ pairwise edge-disjoint cycles of lengths $n_{t_0},n_{t_0+1},\ldots,n_{t}$ in $2K_n$.
If $r\neq 1$ (resp. $r=1$), then let $H_4$ be the union of these cycles of lengths  $n_{t_0+1},\ldots,n_{t}$ (resp. $n_{t_0},n_{t_0+1},\ldots,n_{t}$).
Thus, for every pair of distinct vertices $x,y$, we have $0\leq m_{H_4}(xy)\leq 2$.
Now, define the multigraph $H_C$ as the edge-disjoint union of $H_3$ and $H_4$. Evidently, $H_C$ admits a $C$-cycle decomposition and for every pair of distinct vertices $x,y$, we have $\lambda'-2\leq m_{H_C}(xy)\leq \lambda'+2$. \\

\textbf{Case 1.2.}  $ \lambda' $ is even.

Let $l$ be the size of $M$ which is equal to $t_0$ if $r\leq 1$ and is equal to $t_0+1$ if $r\geq 2$. First, suppose that  $  n_1+l-2\geq \frac{\lambda'}{2} \binom{n}{2}$.
 Then, we have
\[
\lambda'\binom{n}{2}= \sigma(M) \geq n_1+2\nu_2(M)+ 3(l-\nu_2(M)-1) \geq \frac{3\lambda'}{2}\binom{n}{2} -2n_1 -\nu_2(M) +3.
\]
Therefore,
\[\nu_2(C)\geq \nu_2(M)-1\geq \frac{\lambda'}{2} \binom{n}{2} -2n+2 \geq \frac{\lambda'-2}{2}\binom{n}{2}. \]

Clearly the complete multigraph $H_3=(\lambda'-2) K_n$ can be decomposed into $\ds\frac{\lambda'-2}{2}\binom{n}{2}$ cycles of length two. Now, let $t'=t-\ds\frac{\lambda'-2}{2}\binom{n}{2}$. Since
\[\sum_{i=1}^{t'} n_i\leq (\lambda'+1) \binom{n}{2}-\sum_{i=t'+1}^t n_i=3 \binom{n}{2},  \]
by Theorem~\ref{lem:disjoint}, the multigraph $ 4K_n $ contains $t'$ pairwise edge-disjoint cycles of lengths $n_1,\ldots,n_{t'}$. Now, let $H_4$ be the union of these cycles. Also, let $H_C$ be the edge-disjoint union of $H_3$ and $H_4$. Hence, for every pair of distinct vertices $x,y$, we have $\lambda'-2\leq m_{H_C}(xy)= m_{H_3}(xy)+m_{H_4}(xy)\leq 4+\lambda'-2=\lambda'+2$. Also, $H_C$ admits a $C$-cycle decomposition.

Now, suppose that $n_1+l-2< \frac{\lambda'}{2} \binom{n}{2}$. Hence, $M$ is $(\lambda', n)$-admissible and then, by Theorem~\ref{thm:cycle}, $ \lambda' K_n-I $ admits an $ M $-cycle decomposition. If $r\geq 2$ (resp. $r=1$), let $F$ be the edge set of a cycle of length $r$ (resp. $n_{t_0}+1$) in the $M$-cycle decomposition of $\lambda'K_n-I$ (if $r=0$, then let $F$ be the empty set). Let $H_3$ be the multigraph obtained from $\lambda'K_n-I$ by removing all edges in $F$. Clearly, for every pair of distinct vertices $x,y$, we have $\lambda'-2\leq m_{H_3}(xy)\leq \lambda'$ (since $I$ is empty).

On the other hand,
\begin{align*}
\sum_{i=t_0}^{t} n_i &\leq 2n+ \sum_{i=t_0+2}^{t} n_i\leq 2n+(\lambda'+1)\binom{n}{2}- \sum_{i=1}^{t_0+1} n_i \\ &\leq 2n+(\lambda'+1)\binom{n}{2}-f(\lambda',n)= 2n+\binom{n}{2}<2\binom{n}{2}-2n,
\end{align*}
where the last inequality holds for $ n\geq 10 $. Hence, by Theorem~\ref{lem:disjoint}, the complete multigraph $ 2K_n $ contains $t-t_0+1$ pairwise edge-disjoint cycles of lengths $n_{t_0},n_{t_0+1},\ldots,n_{t}$.
If $r\neq 1$ (resp. $r=1$), then let $H_4$ be the union of these cycles of lengths  $n_{t_0+1},\ldots,n_{t}$ (resp. $n_{t_0},n_{t_0+1},\ldots,n_{t}$).
Thus, for every pair of distinct vertices $x,y$, we have $0\leq m_{H_4}(xy)\leq 2$.
Now, define the multigraph $H_C$ as the edge-disjoint union of $H_3$ and $H_4$. Evidently, $H_C$ admits a $C$-cycle decomposition and for every pair of distinct vertices $x,y$, we have $\lambda'-2\leq m_{H_C}(xy)\leq \lambda'+2$. \\

\textbf{Case 2.} $k=n-2$. \\
Consider the complete multigraph $ \mu K_n $ on the vertex set $ \{1,\ldots,n \} $ and let $ c $ be a proper edge coloring of $\mu K_n$ with $\chi'=\mu(2\lfloor (n-1)/2\rfloor +1)$ colors $ \{1,2,\ldots, \chi'\} $, where each color class is of size exactly $\lfloor n/2\rfloor$. Let $E=(e_1,e_2,\ldots ,e_{\mu \binom{n}{2}})$ be an ordering of the edge set of $\mu K_n$ such that $c(e_p)\leq c(e_q)$, for every $p<q$. For every $i\in\{1,\ldots, \mu \binom{n}{2}\}$, let $f_i=\{1,\ldots, n\}\setminus e_i$.
Let $\sigma_0=0$. Also, for every $i\in\{1,\ldots, t\}$,  let $\sigma_i= \sum_{j=1}^i n_j$, and for every $i\in\{t+1,\ldots,  t+s\}$, let $\sigma_i=\sigma_t+\sum_{j=1}^{i-t} m_j$.  Now, for every $i\in\{1,\ldots, t+s\}$, define $E_i=\{f_{\sigma_{i-1}+1}, \ldots, f_{\sigma_i}\}$ and $E'_i=\{e_{\sigma_{i-1}+1}, \ldots, e_{\sigma_i}\}$. It is clear that $\{E_1,E_2,\ldots, E_{t+s}\}$ is a partition of the edge set of $\mu K_n^{(k)}$. Now, for every $i\in\{1,\ldots, t\}$ (resp. $i\in \{t+1,\ldots, t+s\}$), we construct a Berge cycle (resp. Berge path) of length $n_i$ (resp. $m_i$) with the edges in $E_i$. This gives the desired decomposition of $ \mu K_n^{(k)} $.

To do this, fix $i\in\{1,\ldots, t\}$ and for every $j\in\{1,\ldots, n_i-1\}$, let $g_{j+1}=f_{\sigma_{i-1}+j}\cap f_{\sigma_{i-1}+j+1}$. Also, let $g_{1}=f_{\sigma_i}\cap f_{\sigma_{i-1}+1}$. It is clear that $|g_j|\geq n-4$, for every $j\in \{1,\ldots, n_i\}$. We claim that $(g_1,\ldots g_{n_i})$ has an SDR $(x_1,\ldots, x_{n_i})$ (i.e. $x_j\in g_j$ and $x_j$'s are distinct).
Since $n_i\leq n$ and $c(e_p)\leq c(e_q)$, for every $p<q$, at most three colors appear on the edges in $E'_i$. Therefore, every $j\in \{1,\ldots,n\}$ is a member of at least $n_i-3$ sets in $E_i$ and thus, for every $j\in \{1,\ldots,n\}$,
at least $n_i-6$ sets in $\{g_1,\ldots, g_{n_i}\}$ contain $j$. Therefore, the union of every 7 sets in $\{g_1,\ldots, g_{n_i}\}$ is equal to the whole set $\{1,\ldots, n\}$. Now, we check Hall's condition for $(g_1,\ldots, g_{n_i})$. Let $I\subseteq \{1,\ldots, n_i\}$. If $1\leq |I|\leq n-4$, then $|\cup_{j\in I}g_j|\geq n-4\geq |I|$. Also, if $|I|\geq n-3$, then since $n\geq 10$, by the above argument, $\cup_{j\in I} g_j=\{1,\ldots, n\}$ and thus, $|\cup_{j\in I} g_j|=n\geq  |I|$.
Hence, by Hall's theorem, $(g_1,\ldots, g_{n_i})$ has an SDR $(x_1,\ldots, x_{n_i})$ and so $E_i$ is the edge set of a Berge cycle of length $n_i$ in $ \mu K_n^{(k)} $ with the core sequence $(x_1,\ldots, x_{n_i})$.
Now, fix $i\in\{t+1,\ldots, s+t\}$ and for every $j\in \{1,\ldots, m_i-1\}$, let $g_{j+1}=f_{\sigma_{i-1}+j}\cap f_{\sigma_{i-1}+j+1}$. Also, let $g_1=f_{\sigma_{i-1}+1}$ and $g_{m_i+1}=f_{\sigma_i}$.  By a similar argument, $(g_1,\ldots, g_{m_i+1})$ has an SDR $(x_1,\ldots, x_{m_i+1})$ and thus, $E_i$ is the edge set of a Berge path of length $m_i$ in $ \mu K_n^{(k)} $ with the core sequence $(x_1,\ldots, x_{m_i+1})$. This completes the proof for the case $k=n-2$. \\

\textbf{Case 3.} $k=n-1$. \\
For each $i\in\{1,\ldots, \mu n\}$, let  $r_i=(i-1 \mod n)+1$ and $f_{i}=\{1,\ldots, n\} \setminus \{r_i\}$. Also, let $E=(f_1,\ldots, f_{\mu n})$ which is an ordering of the edge set of $\mu K_n^{(k)}$. Now, for every $i\in\{1,\ldots, t+s\}$, define $E_i=\{f_{\sigma_{i-1}+1}, \ldots, f_{\sigma_i}\}$, where $ \sigma_i$'s are chosen as in Case 2. It is clear that $\{E_1,E_2,\ldots, E_{t+s}\}$ is a partition of the edge set of $\mu K_n^{(k)}$. Also, for every $i\in\{1,\ldots, t\}$, if $n_i\geq 3$ (resp. $n_i=2$), then $E_i$ is the edge set of a Berge cycle in $ \mu K_n^{(k)} $ with the core sequence $(r_{\sigma_{i-1}+2}, \ldots, r_{\sigma_i}, r_{\sigma_{i}+1})$ (resp. $(r_{\sigma_{i-1}+3},r_{\sigma_{i-1}+4})$) and for every $i\in\{t+1,\ldots, t+s\}$, $E_i$ is the edge set of a Berge path in $ \mu K_n^{(k)} $ with the core sequence $(r_{\sigma_{i-1}+2}, \ldots, r_{\sigma_{i}+1}, r_{\sigma_{i}+2})$, as desired.



\end{document}